\def\ig#1#2#3#4{\begin{figure}[!ht]\begin{center}%
\includegraphics[height=#2\textheight]{#1.eps}\caption{#4}\label{#3}%
\end{center}\end{figure}}
\def\thtext#1{
  \catcode`@=11
  \gdef\@thmcountersep{. #1}
  \catcode`@=12
}
\def\threst{
  \catcode`@=11
  \gdef\@thmcountersep{.}
  \catcode`@=12
}
\theoremstyle{plain}
\newtheorem{thm}{Theorem}[section]
\newtheorem{prop}[thm]{Proposition}
\newtheorem{cor}[thm]{Corollary}
\newtheorem{lem}[thm]{Lemma}
\theoremstyle{definition}
\newtheorem{dfn}[thm]{Definition}
\newtheorem{rk}[thm]{Remark}
\newtheorem{examp}[thm]{Example}
 \def\.{.\spacefactor\@m}
\def\C{{\mathbb C}}
\def\N{{\mathbb N}}
\def\R{\mathbb R}
\def\a{\alpha}
\def\e{\varepsilon}
\def\g{\gamma}
\def\l{\lambda}
\def\v{\varphi}
\def\0{\emptyset}
\def\:{\colon}
\def\<{\langle}
\def\>{\rangle}
\def\[{\llbracket}
\def\]{\rrbracket}
\def\rom#1{\emph{#1}}
\def\({\rom(}
\def\){\rom)}
\def\ss{\subset}
\def\toGH{\xrightarrow{\operatorname{GH}}}
\def\sp{\supset}
\def\x{\times}
\newcommand{\overbar}[1]{\mkern 1.5mu\overline{\mkern-1.5mu#1\mkern-1.5mu}\mkern 1.5mu}
\def\bA{{\bar A}}
\def\bX{\overbar{X}}
\def\diam{\operatorname{diam}}
\def\dis{\operatorname{dis}}
\def\id{\operatorname{id}}
\def\Lip{\operatorname{Lip}}
\def\cH{{\cal H}}
\def\cP{{\cal P}}
\def\cR{{\cal R}}
\def\tx{{\tilde x}}
\begin{document}
\title{Gromov--Hausdorff Distance Between Segment and Circle}
\author{Yibo Ji, Alexey A. Tuzhilin}
\date{}
\maketitle

\begin{abstract}
We calculate the Gromov--Hausdorff distance between a line segment and a circle in the Euclidean plane. To do that, we introduced a few new notions like round spaces and nonlinearity degree of a metric space.
\end{abstract}

\section{Introduction}
\markright{\thesection.~Introduction}
The Gromov--Hausdorff distance measures the difference between any two metric spaces. There are a few possibilities to do that. One of them, to embed isometrically the both spaces into all possible other metric spaces, then the least possible Hausdorff distance between the images will be just that characteristic. Another way is to establish correspondences between these spaces and to measure the least possible distortion of the spaces metrics produced by these correspondences.

The Gromov--Hausdorff distance has many applications, for example, it helps to investigate the growth of groups, or it can be used in image recognition. However, to get concrete values of the distance, even in the cases of ``simple spaces'' like line segment and the standard circle, is a very non-trivial task. In this paper we discuss our original technique enabled to obtain the exact values in the latter case.

\section{Preliminaries}
\markright{\thesection.~Preliminaries}

In what follows, we work with various non-empty metric spaces, and the distance between points $p$ and $q$ of a metric space we denote by $|pq|$, independently on the choice of the space. For a metric space $X$, $x\in X$, non-empty $A,B\ss X$, $r>0$, and $s\ge0$, we use the following notions and notations:
\begin{itemize}
\item $U_r(x)=\bigl\{y\in X:|xy|<r\bigr\}$ (\emph{open ball with center $x$ and radius $r$\/});
\item $B_s(x)=\bigl\{y\in X:|xy|\le s\bigr\}$ (\emph{closed ball with center $x$ and radius $s$\/});
\item $|xA|=\inf_{a\in A}|xa|$ (\emph{distance between $x$ and $A$\/});
\item $U_r(A)=\bigl\{y\in X:|yA|<r\bigr\}$ (\emph{open $r$-neighborhood of $A$\/});
\item $B_s(A)=\bigl\{y\in X:|yA|\le s\bigr\}$ (\emph{closed $s$-neighborhood of $A$\/});
\item $d_H(A,B)=\inf\bigl\{t:A\ss U_t(B)\text{ and }U_t(A)\sp B\bigr\}$ (\emph{Hausdorff distance between $A$ and $B$\/}).
\end{itemize}

For any non-empty set $X$, we denote by $\cP_0(X)$ the set of all non-empty subsets of $X$. If $X$ is a metric space, then the Hausdorff distance on $\cP_0(X)$ satisfies the triangle inequality, but can be equal $\infty$, and can vanish for different subsets. However, if we restrict $d_H$ to the set $\cH(X)$ of all non-empty closed bounded subsets of $X$, then $d_H$ becomes a metric, see~\cite{BBI}.

\begin{thm}[\cite{BBI}]\label{thm:HausdorfProps}
Given a metric space $X$, the following properties are simultaneously presented or not in the both $X$ and $\cH(X)$\rom: completeness, total boundness, compactness.
\end{thm}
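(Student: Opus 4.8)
The plan is to establish each of the three equivalences separately, exploiting the natural isometric embedding $\iota\colon X\hookrightarrow\cH(X)$, $x\mapsto\{x\}$, whose image is closed in $\cH(X)$ (a limit of singletons in $d_H$ is a singleton). This immediately gives the ``only if'' directions: a closed subset of a complete space is complete, a subset of a totally bounded space is totally bounded, and a closed subset of a compact space is compact. So the real content is the ``if'' direction in each case, i.e.\ deducing a property of $\cH(X)$ from the same property of $X$.

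For \emph{completeness}, I would take a Cauchy sequence $A_n$ in $\cH(X)$ and produce a candidate limit, the standard choice being
$A=\bigl\{x\in X:\text{there exist }x_{n_k}\in A_{n_k}\text{ with }x_{n_k}\to x\bigr\}$, i.e.\ the set of limits of ``partial selections''. One shows $A$ is nonempty and closed (closedness is easy; nonemptiness uses the Cauchy condition to build a genuine Cauchy sequence $x_n\in A_n$ with $|x_nx_{n+1}|$ small, then uses completeness of $X$), that $A$ is bounded, and finally that $d_H(A_n,A)\to0$ by splitting into the two inclusions $A\ss U_t(A_n)$ and $A_n\ss U_t(A)$ and chasing $\e$'s through the Cauchy estimates. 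For \emph{total boundedness}, given $\e>0$ pick a finite $\e$-net $S=\{s_1,\dots,s_m\}$ in $X$; then the finitely many sets $\{\,T : \0\ne T\ss S\,\}$ form an $\e$-net for $\cH(X)$, because any $A\in\cH(X)$ lies within $d_H$-distance $\e$ of $T:=\{s_i : B_\e(s_i)\cap A\ne\0\}$. For \emph{compactness}, I would simply combine the previous two via the characterization ``compact $\lra$ complete $+$ totally bounded'', noting that $X$ compact gives $X$ complete and totally bounded, hence $\cH(X)$ complete and totally bounded, hence compact, and conversely $\cH(X)$ compact forces $X\cong\iota(X)$ compact as a closed subspace.

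The main obstacle is the completeness argument, and specifically verifying that the proposed limit set $A$ is nonempty and that $d_H(A_n,A)\to0$: this is where one must be careful to extract, from a $d_H$-Cauchy sequence, an honest Cauchy sequence in $X$ converging into $A$, and to show every point of $A_n$ is approximated by a point of $A$ (not just vice versa). The total-boundedness and compactness parts are then essentially formal. Since this is a known result from~\cite{BBI}, I would keep the exposition brief and refer there for the routine estimates, presenting only the constructions above in enough detail to make the paper self-contained.
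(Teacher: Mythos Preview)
Your proof plan is correct and follows the standard route found in most textbooks, including the one cited. Note, however, that the paper does not actually prove this theorem: it is stated with a citation to~\cite{BBI} and used as a black box, so there is no ``paper's own proof'' to compare against. If you intend to include a proof for self-containedness, your outline is perfectly adequate; otherwise a citation suffices, as the authors do.
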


Given $r>0$, a subset of a metric space is called \emph{$r$-separated\/} if the distance between any its different points is at least $r$.

Given two sets $X$ and $Y$, \emph{a correspondence between $X$ and $Y$} is each subset $R\ss X\x Y$ such that for any $x\in X$ there exists $y\in Y$ with $(x,y)\in R$ and, vise versa, for any $y\in Y$ there exists $x\in X$ with $(x,y)\in R$. Let $\cR(X,Y)$ denote the set of all correspondences between $X$ and $Y$. If $X$ and $Y$ are metric spaces, and $R\in\cR(X,Y)$, then we define \emph{the distortion $\dis R$ of $R$} as follows:
$$
\dis R=\sup\Bigl\{\bigl||x_1x_2|-|y_1y_2|\bigr|:(x_1,y_1),\,(x_2,y_2)\in R\Bigr\}.
$$
Further, \emph{the Gromov--Hausdorff distance $d_{GH}(X,Y)$ between metric spaces $X$ and $Y$}, or for short \emph{GH-distance}, is the value
$$
d_{GH}(X,Y)=\frac12\inf\bigl\{\dis R:R\in\cR(X,Y)\bigr\}.
$$
An equivalent definition: it is the infimum of Hausdorff distances $d_H(X',Y')$ between all possible subsets $X'$ and $Y'$ of the metric spaces $Z$, provided $X'$ is isometric to $X$, and $Y'$ is isometric to $Y$. It is well-known~\cite{BBI} that $d_{GH}$ is a metric on the set of isometry classes of compact metric spaces, in particular, two compact metric spaces are isometric if and only if the GH-distance between them vanishes. In general situation, $d_{GH}$ satisfies the triangle inequality, is bounded for bounded spaces (for non-bounded spaces it can be infinite), and can vanish for non-isometric spaces. Also, for any $A,B\in\cP_0(X)$, it holds $d_{GH}(A,B)\le d_H(A,B)$. In particular, if $\bA$ is the closure of $A$, then $d_{GH}(A,\bA)=d_H(A,\bA)=0$.

If $X_1,X_2,\ldots$ and $X$ are some metric spaces such that $d_{GH}(X_i,X)\to0$, then we say that the sequence $X_i$ is \emph{Gromov--Hausdorff convergent\/} to $X$ and write $X_i\toGH X$.

It is easy to see that for any correspondence $R\in\cR(X,Y)$, its closure in $X\x Y$ has the same distortion, thus, to achieve $d_{GH}(X,Y)$, it suffices to consider only closed correspondences. In other words, if $\cR_c(X,Y)$ is the set of all closed correspondences between $X$ and $Y$, then we have
$$
d_{GH}(X,Y)=\frac12\inf\bigl\{\dis R:R\in\cR_c(X,Y)\bigr\}.
$$

For any metric space $X$ and a real number $\l>0$, we denote by $\l X$ the metric space which differs from $X$ by multiplication of all its distances by $\l$. For $\l=0$, we define $\l X$ as a single point space.

\begin{thm}[\cite{BBI}]\label{thm:GH_simple}
Let $X$ and $Y$ be metric spaces. Then
\begin{enumerate}
\item\label{thm:GH_simple:1} if $X$ is a single-point metric space, then $d_{GH}(X,Y)=\frac12\diam Y$\rom;
\item\label{thm:GH_simple:2} if $\diam X<\infty$, then
$$
d_{GH}(X,Y)\ge\frac12|\diam X-\diam Y|;
$$
\item\label{thm:GH_simple:3} $d_{GH}(X,Y)\le\frac12\max\{\diam X,\diam Y\}$, in particular,  $d_{GH}(X,Y)<\infty$ for bounded $X$ and $Y$\rom;
\item\label{thm:GH_simple:4} for any metric spaces $X$, $Y$ and any $\l>0$, we have $d_{GH}(\l X,\l Y)=\l d_{GH}(X,Y)$.
\end{enumerate}
\end{thm}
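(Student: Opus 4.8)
The plan is to derive all four items straight from the definitions of correspondence and distortion; no machinery beyond the triangle inequality is needed. For \refitem{thm:GH_simple:1}, note that when $X=\{p\}$ is a one-point space the \emph{only} correspondence in $\cR(X,Y)$ is the total one $R=\{p\}\x Y$, so $\dis R=\sup\bigl\{\bigl||pp|-|y_1y_2|\bigr|:y_1,y_2\in Y\bigr\}=\sup\{|y_1y_2|:y_1,y_2\in Y\}=\diam Y$, and halving gives the value. For \refitem{thm:GH_simple:3} I would again test the total correspondence $R=X\x Y\in\cR(X,Y)$: for any $(x_1,y_1),(x_2,y_2)\in R$ the elementary inequality $\bigl||x_1x_2|-|y_1y_2|\bigr|\le\max\{|x_1x_2|,|y_1y_2|\}$ (valid for nonnegative reals) yields $\bigl||x_1x_2|-|y_1y_2|\bigr|\le\max\{\diam X,\diam Y\}$, hence $\dis R\le\max\{\diam X,\diam Y\}$ and $d_{GH}(X,Y)\le\frac12\max\{\diam X,\diam Y\}$; finiteness for bounded $X,Y$ is then immediate.

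For \refitem{thm:GH_simple:4} the key observation is that a correspondence is a subset of the product set, and rescaling the metric changes neither $X\x Y$ as a set nor which subsets are correspondences; thus $\cR(X,Y)=\cR(\l X,\l Y)$ literally. For a fixed $R$ in this common set, multiplying every distance by $\l>0$ multiplies each number $\bigl||x_1x_2|-|y_1y_2|\bigr|$ by $\l$, so $\dis_{\l X,\l Y}R=\l\,\dis_{X,Y}R$; taking the infimum over $R$ and multiplying by $\frac12$ gives $d_{GH}(\l X,\l Y)=\l\,d_{GH}(X,Y)$. (The degenerate case $\l=0$ reduces both sides to the distance from a one-point space, which is consistent with the convention.)

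The one item deserving a little care is \refitem{thm:GH_simple:2}. Here I would fix an arbitrary $R\in\cR(X,Y)$ and show $\dis R\ge|\diam X-\diam Y|$, so that the bound persists after passing to the infimum. Assume first both diameters are finite and, without loss of generality, $\diam Y\ge\diam X$. For each $\e>0$ pick $y_1,y_2\in Y$ with $|y_1y_2|>\diam Y-\e$, then choose $x_1,x_2\in X$ with $(x_1,y_1),(x_2,y_2)\in R$; since $|x_1x_2|\le\diam X$ we get $\dis R\ge|y_1y_2|-|x_1x_2|>\diam Y-\diam X-\e$, and letting $\e\to0$ gives $\dis R\ge\diam Y-\diam X$. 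The case $\diam X\ge\diam Y$ is symmetric. If $\diam Y=\infty$ (recall $\diam X<\infty$ is assumed), the same choice with $|y_1y_2|$ arbitrarily large forces $\dis R=\infty=|\diam X-\diam Y|$. I expect this bookkeeping with non-attained suprema and the infinite-diameter case to be the only mild obstacle; the remaining three items are essentially one-line verifications.
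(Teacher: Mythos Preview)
Your proof is correct in all four parts. The paper itself does not supply a proof of this theorem: it is stated with a citation to~\cite{BBI} and no argument is given, so there is no ``paper's own proof'' to compare against. The reasoning you wrote is precisely the standard derivation from the correspondence/distortion definition that one finds in that reference, and each item is handled cleanly (including the $\e$-bookkeeping and the infinite-diameter case in \refitem{thm:GH_simple:2}).
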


\section{Homogeneous and round metric spaces}
\markright{\thesection.~Homogeneous and round metric spaces}
Fix a real $0<b\le\diam X$ and an integer $n\ge2$. A metric space $X$ is called \emph{$(b,n)$-homogeneous\/} if for any point $x\in X$ there exists a $b$-separated $n$-point subset $S\ss X$ such that $x\in S$.

\begin{thm}\label{thm:homoANDnot}
Let $X$ be a $(b,n)$-homogeneous metric space. Suppose that $Y$ is not $(a,n)$-homogeneous for some $0<a<b$. Then $2d_{GH}(X,Y)\ge b-a$.
\end{thm}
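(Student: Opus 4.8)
The plan is to argue by contradiction on a correspondence achieving nearly the infimum. Suppose $2d_{GH}(X,Y)<b-a$; then there exists a correspondence $R\in\cR(X,Y)$ with $\dis R<b-a$. Fix any point $y\in Y$. We want to produce an $a$-separated $n$-point subset of $Y$ containing $y$, contradicting the hypothesis that $Y$ is not $(a,n)$-homogeneous. To do that, first pick $x\in X$ with $(x,y)\in R$, and use the $(b,n)$-homogeneity of $X$ to get a $b$-separated $n$-point subset $S=\{x=x_1,x_2,\ldots,x_n\}\ss X$ containing $x$. For each $x_i$, choose $y_i\in Y$ with $(x_i,y_i)\in R$, taking $y_1=y$. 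Let $T=\{y_1,\ldots,y_n\}\ss Y$.

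Next I would estimate the pairwise distances in $T$. For $i\ne j$ we have $(x_i,y_i),(x_j,y_j)\in R$, so $\bigl||x_ix_j|-|y_iy_j|\bigr|\le\dis R<b-a$, hence $|y_iy_j|>|x_ix_j|-(b-a)\ge b-(b-a)=a$. In particular all the $y_i$ are pairwise distinct (their distances are strictly positive), so $T$ is genuinely an $n$-point set, and it is $a$-separated and contains $y=y_1$. This exhibits, for the arbitrary point $y\in Y$, an $a$-separated $n$-point subset of $Y$ through $y$, i.e. $Y$ is $(a,n)$-homogeneous — contradiction. Therefore $2d_{GH}(X,Y)\ge b-a$.

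One routine point to handle carefully: the definition of $(a,n)$-homogeneity requires $0<a\le\diam Y$, so if $a>\diam Y$ the statement "$Y$ is not $(a,n)$-homogeneous" is vacuous/ill-posed and the hypothesis should be read as implicitly assuming $a\le\diam Y$; alternatively one notes that if $\diam Y<a<b\le\diam X$ then Theorem~\ref{thm:GH_simple}\refitem{thm:GH_simple:2} already gives $2d_{GH}(X,Y)\ge\diam X-\diam Y\ge b-a$, so the claimed bound holds in that case too. I do not expect any real obstacle here: the argument is a direct unwinding of the definition of distortion, and the only thing to be attentive to is that the strict inequality $|y_iy_j|>a$ simultaneously yields both the $a$-separation and the fact that $|T|=n$.
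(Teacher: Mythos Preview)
Your proof is correct and follows essentially the same route as the paper: assume a correspondence $R$ with $\dis R<b-a$, pull a witness $y\in Y$ back to $x\in X$, push a $b$-separated $n$-tuple through $R$, and use the distortion bound to get an $a$-separated $n$-tuple in $Y$ through $y$. If anything, you are slightly more careful than the paper in noting explicitly that $|y_iy_j|>a>0$ forces $|T|=n$, and in treating the boundary case $a>\diam Y$.
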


\begin{proof}
Put $c=b-a$ and suppose to the contrary that $2d_{GH}(X,Y)<c$, then there exists $R\in\cR(X,Y)$ such that $\dis R<c$, therefore, for any $(x_1,y_1),\,(x_2,y_2)\in R$ with $|x_1x_2|\ge b$, it holds $|y_1y_2|>|x_1x_2|-c\ge b-c=a$. Since $Y$ is not $a$-homogeneous, there exists $y\in Y$ such that there is no $(a,n)$-separated $n$-point subset $T\ss Y$ with $y\in T$. Since $R$ is a correspondence, there exists $x\in X$ such that $(x,y)\in R$. Since $X$ is $(b,n)$-homogeneous, there exists $b$-separated $S=\{x_1,\ldots,x_n\}\ss X$ such that $x_1=x$. Let $T=\{y_1,\ldots,y_n\}$ with $y_1=y$ and $(x_i,y_i)\in R$ for any $i$. Then $T$ is an $a$-separated $n$-point subset $Y$ containing $y$, a contradiction.
\end{proof}

A metric space $X$ is called \emph{round\/} if it is $(b,2)$-homogeneous for any $0<b<\diam X$.

\begin{examp}
Let $X\ss\R^n$ be a sphere of radius $r>0$ w.r.t\. some norm. Then $X$ is round, but not $(a,2)$-homogeneous for each $a>2r$.
\end{examp}

\begin{cor}\label{cor:Gh-dist-round-nonhomo}
Let $X$ be a round metric space, $0<a<\diam X$, and suppose that a metric space $Y$ is not $(a,2)$-homogeneous. Then $2d_{GH}(X,Y)\ge\diam X-a$.
\end{cor}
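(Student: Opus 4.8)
The plan is to reduce the statement directly to Theorem~\ref{thm:homoANDnot} with $n=2$ and then pass to a limit. First I would observe that, since $a<\diam X$, every real number $b$ with $a<b<\diam X$ is admissible in the definition of roundness; hence $X$ is $(b,2)$-homogeneous for each such $b$. Fixing one such $b$, I note that $0<a<b$ and that $Y$ is not $(a,2)$-homogeneous by hypothesis, so Theorem~\ref{thm:homoANDnot} applied with $n=2$ yields $2d_{GH}(X,Y)\ge b-a$.

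The second step is the limiting argument: the inequality $2d_{GH}(X,Y)\ge b-a$ holds for every $b\in(a,\diam X)$, so taking the supremum over such $b$ — equivalently, letting $b\to\diam X$ from below — gives $2d_{GH}(X,Y)\ge\diam X-a$, which is the claim. If $\diam X=\infty$, the same family of estimates shows $2d_{GH}(X,Y)=\infty=\diam X-a$, so this case is covered automatically.

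I do not expect any genuine obstacle here; the proof is a one-line application of the previous theorem together with a supremum. The only points deserving a moment's care are that roundness is stated with the strict inequality $b<\diam X$, which is precisely why one passes to the limit in $b$ rather than applying the theorem once at $b=\diam X$, and that the degenerate case $\diam X=\infty$ should be mentioned explicitly even though it requires no extra work.
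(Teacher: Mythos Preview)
Your proposal is correct and follows exactly the same route as the paper: apply Theorem~\ref{thm:homoANDnot} with $n=2$ for each $b$ with $a<b<\diam X$, then let $b\to\diam X$. The paper's version is simply terser and does not single out the $\diam X=\infty$ case.
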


\begin{proof}
Theorem~\ref{thm:homoANDnot} implies that $2d_{GH}(X,Y)\ge b-a$ for each $b$, $0<a<b<\diam X$, and the result follows from the arbitrariness of $b$.
\end{proof}

\begin{examp}
Let $X\ss\R^m$ and $Y\ss\R^n$ be spheres of radii $b$ and $a$ w.r.t\. some norms, then
$$
2d_{GH}(X,Y)\ge2b-2a.
$$
\end{examp}

\section{Nonlinearity degree of a metric space}
\markright{\thesection.~Nonlinearity degree of a metric space}

In this section, we introduce the notion of nonlinearity degree and investigate several its properties. To start with, we denote by $\Lip_a(X)$ the set of all real-valued $a$-Lipschitz functions defined on a metric space $X$.

\begin{dfn}
\emph{The nonlinearity degree of a metric space $X$} is defined as follows:
$$
c(X):=\inf_{f\in\Lip_1(X)}\sup\bigl\{|xy|-|f(x)-f(y)|:x,y\in X\bigr\}.
$$
\end{dfn}

\begin{rk}\label{rk:cX}
Clearly, $c(X)\ge0$, and for any $X\ss\R$ we have $c(X)=0$ (indeed, to achieve the value $0$, we can take the inclusion mapping as $f$). Also, since $c(X)\le\diam X$, then for each bounded space $X$, the value $c(X)$ is finite.
\end{rk}

If $f\:X\to\R$ is $a$-Lipschitz, then for any $b\in\R$ the function $f+b$ is $a$-Lipschitz as well, and for any $x,y\in X$ the value $|xy|-|f(x)-f(y)|$ remains the same. In addition, for a bounded metric space, each $a$-Lipschitz function is bounded, thus, to calculate the value $c(X)$ in this case, we can restrict ourselves by those $f\in\Lip_1(X)$ that satisfy $\inf f(X)=0$. The set of all such functions we denote by $\Lip_1^0(X)$. Hence, we got the following

\begin{thm}\label{thm:LipschitzWithZero}
For any bounded metric space $X$, we have
$$
c(X):=\inf_{f\in\Lip_1^0(X)}\sup\bigl\{|xy|-|f(x)-f(y)|:x,y\in X\bigr\}.
$$
\end{thm}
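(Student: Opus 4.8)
The plan is to show that restricting the infimum from $\Lip_1(X)$ to its subset $\Lip_1^0(X)$ does not change its value. Write $c_0(X)$ for the quantity on the right-hand side, i.e.\ the infimum of $\sup\bigl\{|xy|-|f(x)-f(y)|:x,y\in X\bigr\}$ taken over $f\in\Lip_1^0(X)$. Since $\Lip_1^0(X)\ss\Lip_1(X)$, every competitor for $c_0(X)$ is also a competitor for $c(X)$, so the inequality $c(X)\le c_0(X)$ is immediate.

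For the reverse inequality I would take an arbitrary $f\in\Lip_1(X)$ and manufacture from it a function of $\Lip_1^0(X)$ with the same supremum. The only place where the boundedness of $X$ is used is the remark that a $1$-Lipschitz function on a bounded space is bounded: for all $x,y\in X$ we have $|f(x)-f(y)|\le|xy|\le\diam X<\infty$, so $f(X)$ is a bounded subset of $\R$ and $m:=\inf f(X)$ is a finite real number. Put $g:=f-m$. Then $g$ is again $1$-Lipschitz, $\inf g(X)=0$, hence $g\in\Lip_1^0(X)$, and since $|g(x)-g(y)|=|f(x)-f(y)|$ for all $x,y$,
$$
\sup\bigl\{|xy|-|g(x)-g(y)|:x,y\in X\bigr\}=\sup\bigl\{|xy|-|f(x)-f(y)|:x,y\in X\bigr\}.
$$
The left-hand side is $\ge c_0(X)$ by definition of $c_0(X)$, so $c_0(X)\le\sup\bigl\{|xy|-|f(x)-f(y)|:x,y\in X\bigr\}$ for every $f\in\Lip_1(X)$; taking the infimum over such $f$ gives $c_0(X)\le c(X)$.

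Combining the two inequalities yields $c(X)=c_0(X)$, which is exactly the asserted formula. I do not expect any genuine obstacle here: the whole argument is the two-inequality packaging of the observations already recorded in the paragraph preceding the statement, and the single point worth spelling out is the finiteness of $m=\inf f(X)$, which is where the boundedness hypothesis on $X$ enters.
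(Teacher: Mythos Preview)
Your proof is correct and follows exactly the approach of the paper: the paper's argument is simply the paragraph of observations preceding the theorem (translation by a constant preserves both the $1$-Lipschitz property and each value $|xy|-|f(x)-f(y)|$, and boundedness of $X$ makes $\inf f(X)$ finite), and you have done nothing more than package those observations into the standard two-inequality form.
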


The nonlinearity degree can be used to estimate the minimal Gromov--Hausdorff distance between metric space $X$ and non-empty subsets of $\R$.

\begin{thm}\label{thm:GHidtsToRsubs}
For any metric space $X$, we have
$$
\bigl|X\,\cP_0(\R)\bigr|=\inf_{Z\in\cP_0(\R)}d_{GH}(X,Z)\le c(X)/2.
$$
If $X$ is bounded, then there exists $Z\in\cH(\R)$ such that $d_{GH}(X,Z)\le c(X)/2$.
\end{thm}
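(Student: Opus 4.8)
The plan is to turn every $1$-Lipschitz function on $X$ into a subset of $\R$ that is close to $X$ in the Gromov--Hausdorff sense, namely its own image.

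First I would fix $f\in\Lip_1(X)$, set $Z=f(X)\ss\R$, and look at the graph correspondence $R=\bigl\{(x,f(x)):x\in X\bigr\}\in\cR(X,Z)$. Since $f$ is $1$-Lipschitz, for all $x_1,x_2\in X$ we have $|f(x_1)-f(x_2)|\le|x_1x_2|$, so the absolute value in the definition of $\dis R$ disappears and $\dis R=\sup\bigl\{|x_1x_2|-|f(x_1)-f(x_2)|:x_1,x_2\in X\bigr\}$. Hence $d_{GH}(X,Z)\le\frac12\dis R$ is exactly half the quantity over which $c(X)$ is an infimum; taking the infimum over $f\in\Lip_1(X)$ gives $\inf_{Z\in\cP_0(\R)}d_{GH}(X,Z)\le c(X)/2$, which is the first claim. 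This part is a direct unwinding of the definitions.

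For the bounded case the extra content is that the bound is realized by an honest element $Z\in\cH(\R)$ even when the infimum defining $c(X)$ is not attained. Here I would invoke Theorem~\ref{thm:LipschitzWithZero} to pick $f_n\in\Lip_1^0(X)$ with $\sup\bigl\{|xy|-|f_n(x)-f_n(y)|:x,y\in X\bigr\}\to c(X)$; since $\inf f_n(X)=0$ and $f_n$ is $1$-Lipschitz, $f_n(X)$ lies in $[0,\diam X]$, so its closure $Z_n$ is a non-empty closed bounded subset of $\R$, and by $d_{GH}(A,\bA)=0$ together with the triangle inequality $d_{GH}(X,Z_n)=d_{GH}(X,f_n(X))\le c(X)/2+o(1)$. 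The segment $[0,\diam X]$ is compact, so by Theorem~\ref{thm:HausdorfProps} the hyperspace $\cH([0,\diam X])$ is compact, and I would extract a subsequence with $d_H(Z_n,Z)\to0$ for some $Z\in\cH([0,\diam X])\ss\cH(\R)$. Then, using $d_{GH}\le d_H$ and the triangle inequality, $d_{GH}(X,Z)\le\liminf_n d_{GH}(X,Z_n)=c(X)/2$, finishing the proof.

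I expect the only genuine obstacle to be this last passage to the limit: one must be sure the limiting set is still a legitimate closed bounded subset of $\R$ and that the GH-distance does not jump up in the limit, and that is precisely what the compactness of $\cH([0,\diam X])$ (Theorem~\ref{thm:HausdorfProps}) and the inequality $d_{GH}\le d_H$ guarantee. Everything else is routine.
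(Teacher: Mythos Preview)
Your argument is correct and follows essentially the same route as the paper: graph correspondences of $1$-Lipschitz functions for the first inequality, then for bounded $X$ a compactness argument in $\cH(I)$ via Theorem~\ref{thm:HausdorfProps} to extract a limiting $Z$. The only differences are cosmetic---your containment $f_n(X)\ss[0,\diam X]$ is in fact tighter and more direct than the paper's $[0,\diam X+c(X)]$, and in the last line the ``$=$'' should read ``$\le$'' (you only have $\limsup d_{GH}(X,Z_n)\le c(X)/2$, which is all that is needed).
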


\begin{proof}
If $c(X)=\infty$ then the result holds.

Now, suppose that $c(X)<\infty$. Take $f_k\in\Lip_1(X)$ such that
$$
\sup\Bigl\{|xy|-\bigl|f_k(x)-f_k(y)\bigr|:x,y\in X\Bigr\}\le c(X)+\frac1k,
$$
and put $X_k=f_k(X)$. Let $R_k\in\cR(X,X_k)$ be the graph of $f_k$, namely,
$$
R_k=\Bigl\{\bigl(x,f_k(x)\bigr): x\in X\Bigr\}.
$$
Since $f_k$ is $1$-Lipschitz, then for any $\bigl(x,f_k(x)\bigr),\,\bigl(y,f_k(y)\bigr)\in R_k$ we have $|f_k(x)-f_k(y)|\le|xy|$, therefore,
$$
\dis R_k=\sup_{x,y\in X}\Bigl\{|xy|-\bigl|f_k(x)-f_k(y)\bigr|\Bigr\}\le c(X)+\frac1k.
$$
Due to the arbitrariness of $k\in\N$, we get
$$
\inf_{Z\in\cP_0(\R)}d_{GH}(X,Z)\le\inf_kd_{GH}(X,X_k)\le c(X)/2.
$$

Now we prove the existence of $Z$ for bounded $X$. By Theorem~\ref{thm:LipschitzWithZero}, we can take $f_k$ from $\Lip_1^0(X)$ instead of from $\Lip_1(X)$. Let $\bX_k$ be the closure of $X_k=f_k(X)$. Since $d_{GH}(X_k,\bX_k)=0$, then we have $\inf_kd_{GH}(X,\bX_k)\le c(X)/2$ by the triangle inequality. The boundedness of $X$ implies $d=\diam X<\infty$, and by Theorem~\ref{thm:GH_simple}, we have
$$
\diam\bX_k=\diam X_k\le\diam X+2d_{GH}(X,\bX_k)\le d+c(X)<\infty.
$$
Since $X_k\ge 0$ and $0\in\bX_k$ for each $k\in\N$, then $X_k\ss\bigl[0,d+c(X)\bigr]=:I$ for all $k\in\N$, and, by compactness of $\cH(I)$ due to Theorem~\ref{thm:HausdorfProps}, there exists a convergent subsequence $\bX_{k_i}\in\cH(I)$. Put $Z=\lim_{i\to\infty}\bX_{k_i}$. Since $d_{GH}(\bX_{k_i},Z)\le d_H(\bX_{k_i},Z)$, we obtain $\bX_{k_i}\toGH Z$, therefore $d_{GH}(X,Z)\le c(X)/2$.
\end{proof}

\begin{examp}
Now we show that the inequality in Theorem~\ref{thm:GHidtsToRsubs} cannot be changed to equality, even in the case of compact $X$. Let $X$ be the standard unit circle in the Euclidean plane, endowed with the intrinsic metric. We shall find a compact
$Z\ss\R$ such that $d_{GH}(X,Z)<c(X)/2$. Let us take as $Z$ the segment $[0,\frac23\pi]$. Below, in Lemma~\ref{lem:CircleSegmentLowerBound}, we shall prove that $d_{GH}(X,Z)=\frac13\pi$. On the other hand, given any $f\in\Lip_1(S^1,\R)$, the function $g(x)=f(x)-f(-x)$ has at least one zero point $x_0$. Then
$$
\sup\Bigl\{|xy|-\bigl|f(x)-f(y)\bigr|:x,y\in X\Bigr\}\ge|x_0(-x_0)|-\bigl|f(x_0)-f(-x_0)\bigr|=\pi.
$$
Since the choice of $f$ is arbitrary, we know that $c(X)\ge\pi>2d_{GH}(X,Z)$.
\end{examp}

The next result characterizes all compact metric spaces with $c(X)=0$.

\begin{cor}\label{cor:linearzero}
Each compact metric space $X$ with $c(X)=0$ is isometric to a compact subset of $\R$.
\end{cor}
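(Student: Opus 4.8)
The plan is to deduce the corollary immediately from Theorem~\ref{thm:GHidtsToRsubs}. First I would observe that a compact metric space is in particular bounded, so the second assertion of Theorem~\ref{thm:GHidtsToRsubs} applies to $X$: it produces a set $Z\in\cH(\R)$ with $d_{GH}(X,Z)\le c(X)/2$. Since by hypothesis $c(X)=0$, this forces $d_{GH}(X,Z)=0$.

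Next I would note that $Z$, being a non-empty closed and bounded subset of $\R$, is compact. Thus $X$ and $Z$ are two compact metric spaces with vanishing Gromov--Hausdorff distance. Invoking the fact recalled in the Preliminaries that $d_{GH}$ is a metric on the set of isometry classes of compact metric spaces --- equivalently, that two compact spaces are isometric precisely when the GH-distance between them vanishes --- I conclude that $X$ is isometric to $Z\ss\R$, which is exactly the asserted statement.

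There is essentially no obstacle here: all the substantive work has been pushed into Theorem~\ref{thm:GHidtsToRsubs}, where the one-dimensional model $Z$ is obtained by taking near-optimal $1$-Lipschitz functions $f_k\in\Lip_1^0(X)$, passing to the closures of the images $f_k(X)$ inside a fixed compact interval $I$, and extracting a Hausdorff-convergent subsequence via the compactness of $\cH(I)$ from Theorem~\ref{thm:HausdorfProps}. If one wished to be fully self-contained, the only points worth spelling out are that $Z$ is non-empty (each $f_k$ contributes the point $0$ to the limit, since $\inf f_k(X)=0$) and that one may legitimately pass to the limit in the GH-distance (because $d_{GH}\le d_H$ along the convergent subsequence). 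Beyond that, the corollary is a one-line consequence, and I do not expect any genuine difficulty.
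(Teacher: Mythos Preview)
Your proposal is correct and follows essentially the same route as the paper's proof: apply the second part of Theorem~\ref{thm:GHidtsToRsubs} to obtain $Z\in\cH(\R)$ with $d_{GH}(X,Z)=0$, observe that $Z$ is compact, and conclude isometry from the fact that $d_{GH}$ is a metric on isometry classes of compact spaces. The paper's proof is the one-line version of exactly this argument.
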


\begin{proof}
By Theorem~\ref{thm:GHidtsToRsubs}, there exists a compact subset $Z\ss\R$ with $d_{GH}(X,Z)=0$, thus $X$ and $Z$ are isometric.
\end{proof}

\begin{prop}\label{prop:Min for c exists}
For a compact metric space $X$, there exists a function $f\in\Lip_1(X)$ such that
$$
\sup\bigl\{|xy|-|f(x)-f(y)|:x,y\in X\bigr\}=c(X).
$$
\end{prop}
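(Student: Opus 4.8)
The plan is to exhibit a minimizing sequence $f_k \in \Lip_1(X)$, normalize it via Theorem~\ref{thm:LipschitzWithZero} so that $f_k \in \Lip_1^0(X)$, and then extract a uniformly convergent subsequence using the Arzel\`a--Ascoli theorem; the limit function should then attain the infimum. First I would take $f_k \in \Lip_1^0(X)$ with $\sup\{|xy| - |f_k(x) - f_k(y)| : x,y \in X\} \le c(X) + 1/k$, which is possible by Theorem~\ref{thm:LipschitzWithZero}. Since $X$ is compact it is bounded, say $d = \diam X < \infty$, and every $1$-Lipschitz function $g$ with $\inf g(X) = 0$ satisfies $0 \le g \le d$ on $X$; hence the family $\{f_k\}$ is uniformly bounded. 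It is also equicontinuous, being uniformly $1$-Lipschitz. By Arzel\`a--Ascoli, there is a subsequence $f_{k_i}$ converging uniformly on $X$ to some function $f$, and $f$ is again $1$-Lipschitz (the Lipschitz constant passes to uniform limits), so $f \in \Lip_1(X)$.

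It remains to check that $f$ realizes the infimum. On one hand, $\sup\{|xy| - |f(x) - f(y)| : x,y \in X\} \ge c(X)$ by definition of $c(X)$. On the other hand, for each fixed pair $x,y$ we have $|f(x) - f(y)| = \lim_i |f_{k_i}(x) - f_{k_i}(y)|$, so
$$
|xy| - |f(x) - f(y)| = \lim_{i\to\infty}\bigl(|xy| - |f_{k_i}(x) - f_{k_i}(y)|\bigr) \le \lim_{i\to\infty}\Bigl(c(X) + \tfrac1{k_i}\Bigr) = c(X).
$$
Taking the supremum over $x,y \in X$ gives $\sup\{|xy| - |f(x) - f(y)| : x,y \in X\} \le c(X)$, and combining the two bounds yields equality.

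The one point requiring a little care — and the closest thing to an obstacle — is that a supremum of a family of functions that are each bounded above by $c(X)$ is itself bounded above by $c(X)$, whereas convergence of each term of a minimizing sequence does not a priori control the supremum; the argument above sidesteps this by using the uniform bound $c(X) + 1/k_i$ that holds simultaneously for all pairs $x,y$, rather than passing to the limit inside the supremum. Equivalently, one can note that the functional $f \mapsto \sup_{x,y}\bigl(|xy| - |f(x) - f(y)|\bigr)$ is continuous (indeed $1$-Lipschitz with a factor $2$) with respect to the uniform norm on $C(X)$, so it attains its infimum on the uniformly closed, equicontinuous, uniformly bounded — hence compact — set $\Lip_1^0(X)$; either formulation completes the proof.
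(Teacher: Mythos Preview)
Your proof is correct and follows essentially the same route as the paper: choose a minimizing sequence in $\Lip_1^0(X)$, invoke Arzel\`a--Ascoli to extract a uniformly convergent subsequence, and then pass to the limit in the defining supremum. The only differences are cosmetic --- you justify uniform boundedness explicitly via $0\le f_k\le\diam X$ (the paper is terser here), and you pass to the limit pointwise for each pair $(x,y)$ whereas the paper uses an $\e/3$ argument; your closing remark that the functional $f\mapsto\sup_{x,y}\bigl(|xy|-|f(x)-f(y)|\bigr)$ is $2$-Lipschitz in the uniform norm is a nice alternative formulation not in the paper.
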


\begin{proof}
By Theorem~\ref{thm:LipschitzWithZero}, we can choose a sequence $f_n\in\Lip_1^0(X)$ such that for each $n\in\N$ it holds
$$
\sup\bigl\{|xy|-|f_n(x)-f_n(y)|:x,y\in X\bigr\}<c(X)+\frac1n.
$$

The next lemma is a direct consequence of Arzel\`a-Ascoli theorem~\cite{BBI}.

\begin{lem}\label{lem:ArzelaAscoli}
Each uniformly bounded sequence of $L$-Lipschitz functions on a compact metric space contains a subsequence uniformly convergent to an $L$-Lipschitz function.
\end{lem}

Since any collection of functions from $\Lip_1(X)$ is uniformly bounded, we can apply Lemma~\ref{lem:ArzelaAscoli}, thus, w.l.o.g\. we can suppose that the sequence $f_n$ itself uniformly converges to some $f\in\Lip_1(X)$. Take arbitrary $\e>0$ and choose $N\in\N$ such that for any $n\ge N$ and any $x\in X$ we have
$$
\bigl|f(x)-f_n(x)\bigr|<\e/3\ \ \text{and}\ \ 1/n<\e/3,
$$
hence for any $x,y\in X$ and any $n\ge N$ it holds
$$
\sup\bigl\{|xy|-|f(x)-f(y)|:x,y\in X\bigr\}\le\sup\bigl\{|xy|-|f_n(x)-f_n(y)|:x,y\in X\bigr\}+2\e/3<c(X)+\e.
$$
Due to the arbitrariness of $\e$, we have $\sup\bigl\{|xy|-|f(x)-f(y)|:x,y\in X\bigr\}\le c(X)$, that concludes the proof.
\end{proof}

\begin{rk}
Proposition~\ref{prop:Min for c exists} gives another proof of Theorem $\ref{thm:GHidtsToRsubs}$ when $X$ is compact.
\end{rk}

\begin{thm}\label{thm:circle-cvalued}
Suppose we have a connected compact metric space $X$ equipped with a homeomorphism $\a$ of order $2$ such that $\bigl|\a(x)\,x\bigr|=\diam X$ for each $x\in X$, and a compact metric space $Y$ such that $c(Y)<\diam X$. Then we have $2d_{GH}(X,Y)\ge m$, where $m=\frac23\bigl(\diam X-c(Y)\bigr)$.
\end{thm}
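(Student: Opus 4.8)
The plan is to argue by contradiction. Suppose $2d_{GH}(X,Y) < m = \frac23(\diam X - c(Y))$. Then there is a correspondence $R \in \cR(X,Y)$ with $\dis R < m$. The idea is to use such an $R$ to manufacture a $1$-Lipschitz function on $Y$ that beats $c(Y)$, contradicting the definition of the nonlinearity degree.

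First I would set $D = \diam X$ and pick an optimal or near-optimal $f \in \Lip_1(Y)$, say with $\sup_{y,y'}\bigl(|yy'| - |f(y)-f(y')|\bigr) < c(Y) + \e$ for small $\e$ (or use Proposition~\ref{prop:Min for c exists} directly, since $Y$ is compact, to get an exact minimizer). The map $R$ lets us ``pull back'' $f$ to $X$: define $g \colon X \to \R$ by choosing, for each $x \in X$, some $y$ with $(x,y)\in R$ and setting $g(x) = f(y)$. Since $f$ is $1$-Lipschitz and $\dis R < m$, the function $g$ is $(1 + \text{something})$-Lipschitz-like; more precisely $|g(x) - g(x')| \le |f(y) - f(y')| \le |yy'| \le |xx'| + \dis R$. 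The key structural input is the involution $\a$: for antipodal pairs $x, \a(x)$ we have $|x\,\a(x)| = D$, so $|g(x) - g(\a(x))| \ge |y_x\, y_{\a(x)}| - (\dis R + \text{slack from }f) \ge D - \dis R - (c(Y)+\e)$.

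The heart of the argument is a connectedness/intermediate-value step applied to the function $h(x) = g(x) - g(\a(x))$ on the connected space $X$. Since $h(\a(x)) = -h(x)$, if $g$ (hence $h$) were continuous we would get a zero of $h$, and then at that point $x_0$ we would have $g(x_0) = g(\a(x_0))$ while $|x_0\,\a(x_0)| = D$, forcing $\dis$-type quantities to absorb all of $D$ — this is exactly how one gets the factor involved. The subtlety is that $g$ need not be continuous (the selection in $R$ is arbitrary), so I expect the real work is to run the intermediate value argument at the level of $Y$ and $f$ directly, or to approximate: transfer everything through $R$ to conclude that there exist points $x_0 \in X$ and corresponding $y_0, y_0' \in Y$ (images of $x_0$ and $\a(x_0)$) with $|f(y_0) - f(y_0')|$ small but $|y_0 y_0'|$ close to $D$. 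Then the three quantities $|y_0y_0'| - |f(y_0)-f(y_0')|$, together with the relation between $|y_0y_0'|$ and $D - \dis R$, are combined: one gets $c(Y) + \e \ge |y_0y_0'| - |f(y_0)-f(y_0')| \ge (D - \dis R) - (\text{small})$, and balancing this against a second inequality coming from points realizing the diameter of $f(Y)$ versus $D$ yields $\dis R \ge \frac23(D - c(Y)) - (\text{error})$, i.e. $2d_{GH}(X,Y) \ge m$ after letting $\e \to 0$.

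The main obstacle I anticipate is precisely getting the constant $\frac23$: a naive argument gives only $D - c(Y) \le \dis R$ or similar, not the improved $\frac23$ factor. Extracting $\frac23$ should require using \emph{two} separate pairs of points simultaneously — the antipodal pair (which contributes a term of size $\sim D$ to be split) and a pair realizing (near-)$\diam f(Y)$ — and optimizing the trade-off between how much of $\dis R$ is ``spent'' bounding $f$-values from below on antipodal points versus bounding the spread of $f$ from above. I would also need to verify carefully that $\diam X = D$ is actually attained in the relevant estimates (it is, by the hypothesis $|\a(x)\,x| = D$ for all $x$, plus compactness), and that the intermediate-value step is legitimate despite $g$'s possible discontinuity, most likely by working with the continuous function $y \mapsto f(y) - f(\text{``antipode'' of }y)$ pushed through a continuous section or by a limiting argument over finer and finer nets in $X$.
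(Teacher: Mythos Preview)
Your overall architecture (contradiction, pick an optimal $v\in\Lip_1(Y)$ via Proposition~\ref{prop:Min for c exists}, transfer through a correspondence $R$ with $\dis R<m$, exploit the antipodal structure and connectedness of $X$) matches the paper's. But the two places you flag as ``obstacles'' are in fact genuine gaps, and your proposed fixes head in the wrong direction.

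\textbf{Discontinuity of the selection.} You try to define a single-valued $g(x)=v(y_x)$ and then run an intermediate value argument on $h(x)=g(x)-g(\a(x))$. As you note, $g$ need not be continuous, and the suggested remedies (``push IVT to $Y$'', ``use nets'') do not obviously recover anything. The paper avoids a single-valued selection altogether: with $R$ closed (hence each fiber $R(x)$ compact), it sets
\[
f(x)=\max v\bigl(R(x)\bigr)-\min v\bigl(R(\a_x)\bigr).
\]
This $f$ is not continuous either, but it \emph{is} upper semicontinuous (Lemma~\ref{lem:approximation}), and that is enough for a clopen-decomposition argument rather than IVT: one shows that for each $x$ either $f(x)>d$ or $f(x)<-d$, where $d=\diam X-m-c(Y)$, so $X=A\sqcup B$ with $A=\{f\ge d\}$ closed. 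One then proves $\a(A)\subset B$ and $\a(B)\subset A$, making both sets clopen and contradicting connectedness. No intermediate value is ever taken.

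\textbf{The constant $\tfrac23$.} Your guess that it comes from ``two separate pairs of points'' and balancing $\diam f(Y)$ against $D$ is off. The factor appears in the proof that $\a(A)\subset B$: if some $x$ and $\a_x$ both lie in $A$, then (after a symmetry reduction) one gets
\[
\max v\bigl(R(\a_x)\bigr)-\min v\bigl(R(\a_x)\bigr)\ge 2d,
\]
and since $v$ is $1$-Lipschitz this forces $\diam R(\a_x)\ge 2d$. But $\diam R(\a_x)\le\dis R<m$ (the fiber over a single point must have diameter at most the distortion), so $m>2d=2(\diam X-m-c(Y))$, i.e.\ $m>\tfrac23(\diam X-c(Y))$, the desired contradiction. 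Thus the $\tfrac23$ comes from bounding the spread of $v$ on a \emph{single} fiber, not from comparing two far-apart pairs.
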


\begin{proof}
Suppose to the contrary that $2d_{GH}(X,Y)<m$, thus there is a closed correspondence $R\in\cR_c(X,Y)$ such that $\dis R<m$.

By Proposition $\ref{prop:Min for c exists}$, there exists $v\in \Lip_1(Y)$ such that
$$
\sup\bigl\{|y_1y_2|-|v(y_1)-v(y_2)|:y_1,y_2\in Y\bigr\}=c(Y).
$$
In the following text, we will abbreviate $c(Y)$ as $c$ and $\a(x)$ as $\a_x$.

Since $X\x Y$ is compact and $R$ is its closed subset, then $R$ is compact, together with $R(x)=R\cap\bigl(\{x\}\x Y\bigr)$ and $R(\a_x)=R\cap\bigl(\{\a_x\}\x Y\bigr)$. Since the function $v$ is continuous, then the restrictions of $v$ to $R(x)$ and $R(\a_x)$ are bounded and attain their maximal and minimal values, thus the following function $f\:X\to\R$ is correctly defined:
$$
f(x)=\max v\bigl(R(x)\bigr)-\min v\bigl(R(\a_x)\bigr).
$$

\begin{lem}\label{lem:approximation}
For any sequence $x_i\in X$ such that $x_i\to\tx\in X$ it holds
$$
\limsup_{i\to+\infty}f(x_i)\le f(\tx).
$$
\end{lem}

\begin{proof}
Let us put $R^v:=\bigl\{\bigl(x,v(y)\bigr):(x,y)\in R\bigr\}\ss X\x\R$, then we can rewrite $f$ as follows:
$$
f(x)=\max R^v(x)-\min R^v(\a_x).
$$
Since $R$ is compact, then $R^v$ is compact as the image of $R$ under the continuous mapping $\id\x v$. Thus $R^v$ is closed and bounded.

We put $z_i=\max R^v(x_i)$, $w_i=\min R^v(\a_{x_i})$,  then $f(x_i)=z_i-w_i$, both the sequences $z_i$, $w_i$ are bounded, and both the $z:=\limsup_{i\to+\infty}z_i$, $w:=\liminf_{i\to+\infty}w_i$ are finite. Now we choose subsequences $z_{k_i}$ and $w_{l_i}$ such that
$$
z=\lim_{i\to+\infty}z_{k_i}\ \ \text{and}\ \ w=\lim_{i\to+\infty}w_{l_i}.
$$
Since $R^v$ is closed, $(x_{k_i},z_{k_i}),(\a_{x_{l_i}},w_{l_i})\in R^v$, and $z$, $w$ are finite, then
$$
(\tx,z)=\lim_{i\to +\infty}(x_{k_i},z_{k_i})\in R^v\ \ \text{and}\ \
(\a_\tx,w)=\lim_{i\to+\infty}(\a_{x_{l_i}},w_{l_i})\in R^v,
$$
therefore, $z\in R^v(\tx)$ and $w\in R^v(\a_\tx)$. Thus, we get
$$
\max R^v(\tx)\ge z=\limsup_{i\to+\infty}z_i\ \ \text{and}\ \ \min R^v(\a_\tx)\le w=\liminf_{i\to+\infty}w_i.
$$
In account,
\begin{multline*}
f(\tx)\ge z-w=\limsup_{i\to+\infty}z_i-\liminf_{i\to+\infty}w_i=\\
=\limsup_{i\to+\infty}z_i+\limsup_{i\to+\infty}(-w_i)\ge \limsup_{i\to+\infty}(z_i-w_i)=\limsup_{i\to+\infty}f(x_i),
\end{multline*}
and the proof is completed.
\end{proof}

Since $\dis R<m$ and $|x\,\a_x|=\diam X$, then for any $y_1\in R(x)$ and $y_2\in R(\a_x)$ we have $|y_1y_2|>\diam X-m$, thus
$$
\bigl|v(y_1)-v(y_2)\bigr|\ge|y_1y_2|-c>\diam X-m-c=:d.
$$
So, either $v(y_1)-v(y_2)>d$ or $v(y_2)-v(y_1)>d$. Since the values $\max v\bigl(R(x)\bigr)$ and $\min v\bigl(R(\a_x)\bigr)$ are attained, then for each $x\in X$ it holds
either $f(x)>d$ or $-f(x)>d$, and the latter is equivalent to $f(x)<-d$.

We put $A:=\bigl\{x\in X:f(x)>d\bigr\}$ and $B:=\bigl\{x\in X:f(x)<-d\bigr\}$. Since $c\le\diam X$ and $m=\frac23(\diam X-c)$, we get
$$
d=\diam X-m-c=\frac13(\diam X-c)\ge0\ge-\frac13(\diam X-c)=c+m-\diam X=-d,
$$
hence $A$ is the complement of $B$. Moreover, since $f(x)\neq\pm d$, we have
$$
A=\bigl\{x\in X:f(x)\ge d\bigr\}\ \ \text{and}\ \ B=\bigl\{x\in X:f(x)\le-d\bigr\}.
$$

Now we show that $A$ is closed and, thus, $B$ is open. Indeed, consider an arbitrary sequence $x_i\in A$ converging to some $x\in X$. Then, by Lemma~\ref{lem:approximation}, we have $f(x)\ge\limsup\limits_{x_i\to x}f(x_i)\ge d$, thus $x\in A$ and, therefore, $A$ is closed.

\begin{lem}\label{lem:alphaAssB}
We have $\a(B)\ss A$.
\end{lem}

\begin{proof}
Take arbitrary $x\in B$, then, by definition, $\max v\bigl(R(x)\bigr)-\min v\bigl(R(\a_x)\bigr)\le-d$, thus
$$
-f(x)=\min v\bigl(R(x)\bigr)-\max v\bigl(R(\a_x)\bigr)\le\max v\bigl(R(x)\bigr)-\min v\bigl(R(\a_x)\bigr)\le-d,
$$
so $f(x)\ge d$ and, hence, $\a_x\in A$.
\end{proof}

\begin{lem}\label{lem:alphaBssA}
It holds that $\a(A)\subset B$.
\end{lem}

\begin{proof}
If not, there exists $x\in A$ such that $\a_x\in A$. Without loss of generality, suppose that $\max v\bigl(R(\a_x)\bigr)\ge\max v\bigl(R(x)\bigr)$. Since $x\in A$, we have, by definition, $\max v\bigl(R(x)\bigr)\ge d+\min v\bigl(R(\a_x)\bigr)$.

Recall that for any $y\in R(x)$ and $y'\in R(\a_x)$ it holds $\bigl|v(y)-v(y')\bigr|>d$, thus we get
$$
\max v\bigl(R(\a_x)\bigr)\ge d+\max v\bigl(R(x)\bigr)\ge2d+\min v\bigl(R(\a_x)\bigr).
$$
As we mentioned above, $R(\a_x)$ is compact and $v$ is continuous, hence there exist $y_1,y_2\in R(\a_x)$ such that $v(y_1)=\max v\bigl(R(\a_x)\bigr)$ and $v(y_2)=\min v\bigl(R(\a_x)\bigr)$. Since $v$ is $1$-Lipschitz, we have $|y_1y_2|\ge v(y_1)-v(y_2)$, therefore,
\begin{multline*}
m>\dis R\ge\diam R(\a_x)\ge|y_1y_2|\ge v(y_1)-v(y_2)=\\
=\max v\bigl(R(\a_x)\bigr)-\min v\bigl(R(\a_x)\bigr)\ge2d=2\bigl(\diam X-m-c\bigr).
\end{multline*}
Thus $m>\frac23(\diam X-c)$ which achieves contradiction and, hence, we have $\a(A)\ss B$.
\end{proof}

Since $X=A\sqcup B$ and $\a\:X\to X$ is a homeomorphism, Lemmas~\ref{lem:alphaAssB} and~\ref{lem:alphaBssA} imply that the restriction of $\a$ to $A$ is a homeomorphism onto $B$, hence both $A$ and $B$ are proper closed-open, therefore, $X$ is not connected, a contradiction which completes the proof of the theorem.
\end{proof}

\section{Geometric calculation of distortion}\label{sec:geomCalc}
\markright{\thesection.~Geometric calculation of distortion}
Consider the special case we are interesting in, namely, let $X=I_\l\ss\R$ be a segment of the length $\l$, and $Y=S^1=\bigl\{(x,y)\in\R^2:x^2+y^2=1\bigr\}$ be the standard circle endowed with the intrinsic metric. Since we are interesting in the Gromov--Hausdorff distance between $I_\l$ and $S^1$, it does not matter where we place $I_\l$ on the line $\R$. We will use two possible placements: $[0,\l]$ and $[-\l/2,\,\l/2]$. In this section we will use the latter one.

Take a correspondence $R\in\cR(I_\l,S^1)$. Our goal is to calculate geometrically the distortion of $R$. To do that, we parameterize the circle $S^1$ by the standard angular coordinate $-\pi\le\v\le\pi$, thus the interior distance between points $\v_1,\v_2\in S^1$ equals $\min\bigl\{|\v_1-\v_2|,2\pi-|\v_1-\v_2|\bigr\}$. Let $-\l/2\le t\le\l/2$ be the standard coordinate on the segment $I_\l$. Now we represent the correspondence $R$ as a subset of rectangle $Q:=[-\l/2,\,\l/2]\x[-\pi,\pi]\ss\R^2$.

To calculate the distortion, we need to maximize the function
$$
f\bigl((t_0,\v_0),(t,\v)\bigr)=
\begin{cases}
\bigl|\,|t-t_0|-|\v-\v_0|\,\bigr|&\text{if $|\v-\v_0|\le\pi$},\\
\bigl|\,|t-t_0|-2\pi+|\v-\v_0|\,\bigr|&\text{if $|\v-\v_0|\ge\pi$}\\
\end{cases}
$$
over all $(t_0,\v_0),\,(t,\v)\in R$. In other words, we need to find the least possible $a$ such that $f\bigl((t_0,\v_0),(t,\v)\bigr)\le a$ for all $(t_0,\v_0),\,(t,\v)\in R$.

Take arbitrary $a>0$, and denote by $D_a(t_0,\v_0)$ the subset of the plane $\R^2$ consisting of all $(t,\v)\in\R^2$ such that $f\bigl((t_0,\v_0),(t,\v)\bigr)\le a$. Clearly that $D_a(t_0,\v_0)$ can be obtained from $D_a:=D_a(0,0)$ by shifting by $(t_0,\v_0)$, i.e., $D_a(t_0,\v_0)=(t_0,\v_0)+D_a$. Evident observation shows that $D_a$ looks like the blue domain in Figure~\ref{fig:distort}, left-hand side.

\ig{distort}{0.4}{fig:distort}{Geometric calculation of distortion.}

It is bounded by the segments parallel to bisectors of quadrants, and its sizes are given by the labels in the figure. Thus, we get the following result.

\begin{thm}\label{thm:geomCalc}
Under the notations introduced above, $\dis R\le a$ if and only if for any $(t_0,\v_0)\in R$ it holds $R\ss D_a(t_0,\v_0)$. In particular, the distortion of $R\ss Q$ equals the least possible $a$ satisfying the previous condition.
\end{thm}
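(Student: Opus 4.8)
The plan is to unwind the definitions and reduce everything to the single-shift claim that $D_a(t_0,\v_0) = (t_0,\v_0) + D_a$, which the text has already noted. The distortion $\dis R$ is, by definition, the supremum of $f\bigl((t_0,\v_0),(t,\v)\bigr)$ over all pairs of points of $R$ (this is precisely the geometric function $f$ introduced above, which encodes $\bigl||t-t_0| - |\v-\v_0|_{S^1}\bigr|$, the absolute difference of the segment distance and the intrinsic circle distance). So the inequality $\dis R \le a$ holds exactly when $f\bigl((t_0,\v_0),(t,\v)\bigr)\le a$ for every pair $(t_0,\v_0),(t,\v)\in R$.

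Next I would rewrite this ``for every pair'' condition as a ``for every point, containment'' condition. Fix $(t_0,\v_0)\in R$. By the very definition of $D_a(t_0,\v_0)$, a point $(t,\v)$ satisfies $f\bigl((t_0,\v_0),(t,\v)\bigr)\le a$ if and only if $(t,\v)\in D_a(t_0,\v_0)$. Hence the pairwise bound $f\le a$ over all of $R$ is equivalent to the statement: for every $(t_0,\v_0)\in R$, every $(t,\v)\in R$ lies in $D_a(t_0,\v_0)$, i.e.\ $R\ss D_a(t_0,\v_0)$. This gives the first assertion. For the ``in particular'' clause, observe that the set of $a>0$ for which the containment condition holds is an up-set (if it holds for $a$ it holds for any $a'\ge a$, since $D_a\ss D_{a'}$), and its infimum is exactly $\sup_{(t_0,\v_0),(t,\v)\in R} f\bigl((t_0,\v_0),(t,\v)\bigr) = \dis R$; one should check the boundary case, namely that $a = \dis R$ itself satisfies the condition, which follows because each individual constraint $f\le \dis R$ is satisfied by the definition of supremum as an upper bound.

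One subtlety worth spelling out: the function $f$ on the rectangle $Q$ uses the representative $|\v - \v_0|\le\pi$ or its complement to compute the intrinsic circle distance, and this must genuinely coincide with $\min\{|\v-\v_0|, 2\pi - |\v-\v_0|\}$ for $\v,\v_0\in[-\pi,\pi]$; this is the routine verification that the piecewise formula for $f$ is correct, and it is where the picture (Figure~\ref{fig:distort}) does the bookkeeping. I expect this to be the only real ``obstacle,'' and it is a minor one: it is a case check on the two branches $|\v-\v_0|\le\pi$ versus $|\v-\v_0|\ge\pi$, together with the remark that on the overlap $|\v-\v_0|=\pi$ both formulas agree. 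Everything else is a direct translation between ``supremum of a two-variable function is $\le a$'' and ``every horizontal slice of $R$ through a point of $R$ is contained in a shifted copy of $D_a$,'' so the proof is essentially a reformulation rather than an argument, and no estimates need to be pushed through.
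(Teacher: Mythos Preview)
Your proposal is correct and matches the paper's approach: the paper states Theorem~\ref{thm:geomCalc} without a separate proof, treating it as an immediate consequence of the definitions of $\dis R$, $f$, and $D_a(t_0,\v_0)$ just introduced, and your argument is precisely the routine definition-unwinding that justifies this.
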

\begin{rk}
If a graph $P$ satisfies the above property with respect to $a$, then any subgraph of it also satisfies the above property.
\end{rk}

Let us note that the domain $Q\cap D_a(t_0,\v_0)$ is depicted on the right-hand side of Figure~\ref{fig:distort} in green, as the intersection of blue and yellow domains.

\section{Calculating GH-distance between circle and segment}
\markright{\thesection.~Calculating GH-distance between circle and segment}

Now we apply the technique described above to calculate the Gromov--Hausdorff distance between the standard circle  $S^1=\bigl\{(x,y)\in\R^2:x^2+y^2=1\bigr\}$ with intrinsic metric and the segment $I_\l=[0,\l]\ss\R$. We start from the following

\begin{lem}\label{lem:CircleSegmentLowerBound}
For each $0\le\l<\pi$ we have
$$
d_{GH}(I_\l,S^1)\ge\frac{\pi}2-\frac\l4.
$$
\end{lem}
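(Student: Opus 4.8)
The natural tool is Corollary~\ref{cor:Gh-dist-round-nonhomo}: the circle $S^1$ with its intrinsic metric is round (it is $(b,2)$-homogeneous for every $0<b<\pi=\diam S^1$, since for each point $x$ we may take $\{x,\a(x)\}$ where $\a$ is the antipodal map, or indeed any pair at distance exactly $b$), so if we can show that $I_\l$ fails to be $(a,2)$-homogeneous for a suitable $a$, we get $2d_{GH}(S^1,I_\l)\ge\pi-a$. So the plan is: (i) identify the largest $a$ for which $I_\l=[0,\l]$ is not $(a,2)$-homogeneous; (ii) plug into the corollary; (iii) rearrange to the claimed bound. Alternatively, Theorem~\ref{thm:circle-cvalued} applies directly with $X=S^1$, $\a$ the antipodal involution (which has order $2$ and satisfies $|\a(x)\,x|=\pi=\diam S^1$), $Y=I_\l$: then $c(Y)=c(I_\l)=0$ because $I_\l\ss\R$ (Remark~\ref{rk:cX}), so $2d_{GH}(S^1,I_\l)\ge\frac23(\pi-0)=\frac23\pi$. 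But $\frac23\pi$ is a \emph{constant}, independent of $\l$, whereas the target $\frac\pi2-\frac\l4$ depends on $\l$ and for small $\l$ is close to $\frac\pi2<\frac23\pi$, so Theorem~\ref{thm:circle-cvalued} is in fact \emph{stronger} for every $\l\in[0,\pi)$! Indeed $\frac23\pi>\frac\pi2-\frac\l4$ for all such $\l$. Hence the cleanest route is simply to invoke Theorem~\ref{thm:circle-cvalued}; but since the lemma only claims the weaker bound, I expect the author wants the homogeneity argument, which I now spell out, since it is self-contained and perhaps sets up later sharper estimates.

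\begin{proof}[Proof sketch via homogeneity]
Recall $\diam S^1=\pi$. We first check that for which $a$ the segment $I_\l=[0,\l]$ is \emph{not} $(a,2)$-homogeneous. By definition, $I_\l$ is $(a,2)$-homogeneous iff every point $t\in[0,\l]$ lies in some $a$-separated two-point subset, i.e.\ for every $t$ there exists $s\in[0,\l]$ with $|t-s|\ge a$. The worst point is the midpoint $t=\l/2$, whose farthest companion is at distance $\l/2$. Thus $I_\l$ is $(a,2)$-homogeneous iff $a\le\l/2$, and it \emph{fails} to be $(a,2)$-homogeneous precisely when $a>\l/2$. Fix any such $a$ with $\l/2<a<\pi$ (possible since $\l<\pi$ gives $\l/2<\pi$). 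Applying Corollary~\ref{cor:Gh-dist-round-nonhomo} with the round space $X=S^1$ and $Y=I_\l$ yields
$$
2d_{GH}(I_\l,S^1)=2d_{GH}(S^1,I_\l)\ge\diam S^1-a=\pi-a.
$$
Letting $a\downarrow\l/2$ gives $2d_{GH}(I_\l,S^1)\ge\pi-\l/2$, i.e.\ $d_{GH}(I_\l,S^1)\ge\frac\pi2-\frac\l4$.
\end{proof}

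Since the homogeneity argument already delivers the exact constant without any limiting loss, there is no real obstacle here; the only subtlety is confirming that $S^1$ is round (which the paper's Example after the definition of round essentially records, taking the round sphere case $n=1$, $r=1$, though with the intrinsic rather than chordal metric — one should note the intrinsic circle is also round because antipodal pairs realize distance $\pi$ and by continuity every intermediate separation $b<\pi$ is attained by a suitable pair through any given point). If one instead wants the stronger $\frac23\pi$ bound, the work is already done by Theorem~\ref{thm:circle-cvalued}, whose hypotheses are verified verbatim with $\a$ = antipodal map and $c(I_\l)=0$; the main point to double-check there is that $c(I_\l)=0$, which is immediate from Remark~\ref{rk:cX} since $I_\l$ is a subset of $\R$.
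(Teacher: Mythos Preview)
Your proof sketch via homogeneity is correct and essentially identical to the paper's own proof: both observe that $S^1$ is round, that $I_\l$ fails to be $(a,2)$-homogeneous for every $a>\l/2$ (the paper writes this as $a=\l'/2$ with $\l'>\l$), apply Corollary~\ref{cor:Gh-dist-round-nonhomo}, and pass to the limit.

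One caveat about your preamble: your claim that Theorem~\ref{thm:circle-cvalued} is \emph{stronger} for every $\l\in[0,\pi)$ is mistaken. That theorem yields $d_{GH}(S^1,I_\l)\ge\pi/3$, whereas the lemma's bound is $d_{GH}\ge\pi/2-\l/4$; the latter is actually \emph{larger} for $\l<2\pi/3$ (e.g.\ at $\l=0$ it gives $\pi/2>\pi/3$). Your inequality ``$\frac23\pi>\frac\pi2-\frac\l4$'' compares the $2d_{GH}$ bound from Theorem~\ref{thm:circle-cvalued} against the $d_{GH}$ bound from the lemma, which is why it looked universally favorable. This does not affect the validity of your actual proof.
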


\begin{proof}
Notice that the metric space $S^1$ is round, $\diam S^1=\pi$, and for any $\l'>\l$ the segment $I_\l$ is not $(\l'/2,2)$-homogeneous. Thus, for any $\pi>\l'>\l$ we can use Corollary~\ref{cor:Gh-dist-round-nonhomo} and get
$$
d_{GH}(I_\l,S^1)\ge\frac{\pi}2-\frac{\l'}4.
$$
The result follows from arbitrariness of $\l'$.
\end{proof}

\begin{prop}\label{prop:0topi3}
For each $0\le\l\le2\pi/3$ we have
$$
d_{GH}(I_\l,S^1)=\frac\pi2-\frac\l4.
$$
\end{prop}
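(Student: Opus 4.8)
The plan is to establish the matching upper bound $d_{GH}(I_\l,S^1)\le\frac\pi2-\frac\l4$ for $0\le\l\le2\pi/3$, since Lemma~\ref{lem:CircleSegmentLowerBound} already gives the reverse inequality. By the definition of the Gromov--Hausdorff distance via correspondences, it suffices to exhibit, for every such $\l$, a correspondence $R\in\cR(I_\l,S^1)$ with $\dis R\le\pi-\frac\l2$. I would construct $R$ explicitly using the geometric picture set up in Section~\ref{sec:geomCalc}: represent $R$ as a subset of the rectangle $Q=[-\l/2,\l/2]\x[-\pi,\pi]$, and by Theorem~\ref{thm:geomCalc} it is enough to produce a set $R$ that projects onto both factors and satisfies $R\ss D_a(t_0,\v_0)$ for every $(t_0,\v_0)\in R$, with $a=\pi-\frac\l2$.

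The natural candidate is a ``diagonal band'': roughly, pair the coordinate $t$ on the segment with the angular coordinate $\v$ on the circle via $\v\approx 2t$ (so that the segment of length $\l$, scaled by $2$, covers an arc of length $2\l$), then close up the correspondence around the circle by adding the remaining arc. Concretely one can take $R$ to be the union of the graph of $\v=2t$ (or a suitably chosen affine/piecewise-linear relation) together with enough extra pairs so that every $\v\in[-\pi,\pi]$ is hit; the freedom in how one ``wraps'' the leftover arc $[2\l-2\pi,\ldots]$ back is exactly where the constraint $\l\le2\pi/3$ will be used. I would then verify the $D_a$-containment condition directly: since $D_a$ is bounded by segments parallel to the quadrant bisectors with explicit side-lengths (Figure~\ref{fig:distort}), checking $R\ss D_a(t_0,\v_0)$ amounts to a finite set of elementary inequalities relating $\l$, $\pi$, and $a=\pi-\frac\l2$, plus handling the $|\v-\v_0|\ge\pi$ wrap-around case of the piecewise formula for $f$.

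The main obstacle, I expect, is choosing the correspondence so that it simultaneously (i) is genuinely a correspondence (surjective onto both $I_\l$ and $S^1$ — in particular the circle must be fully covered, which forces some ``vertical'' spread in $R$), and (ii) keeps the distortion down to exactly $\pi-\frac\l2$ rather than something larger. The tension is that covering the whole circle with a short segment forces pairs $(t_0,\v_0),(t,\v)$ with $|t-t_0|$ small but $|\v-\v_0|$ as large as $\pi$, contributing roughly $\pi-|t-t_0|$ to the distortion; one must arrange that the worst such pair gives exactly $\pi-\frac\l2$, which is why the threshold $\l=2\pi/3$ appears (beyond it, this naive construction no longer achieves the bound and a different correspondence — presumably treated in a later proposition — is needed). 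A secondary point to watch is the boundary behaviour at $\l=0$ (where $S^1$ versus a point gives $d_{GH}=\frac12\diam S^1=\frac\pi2$ by Theorem~\ref{thm:GH_simple}\refitem{thm:GH_simple:1}, consistent with the formula) and the degenerate endpoint $\l=2\pi/3$.

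Once such an $R$ is exhibited and the inequality $\dis R\le\pi-\frac\l2$ is checked via Theorem~\ref{thm:geomCalc}, we conclude $d_{GH}(I_\l,S^1)\le\frac\pi2-\frac\l4$, and combining with Lemma~\ref{lem:CircleSegmentLowerBound} yields equality for all $0\le\l\le2\pi/3$.
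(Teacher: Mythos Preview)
Your overall strategy --- lower bound from Lemma~\ref{lem:CircleSegmentLowerBound}, upper bound by exhibiting a correspondence with $\dis R=\pi-\l/2$ --- matches the paper exactly. The construction itself, and the way the distortion is computed, differ.

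The paper does \emph{not} use the geometric machinery of Section~\ref{sec:geomCalc} here (that is reserved for Propositions~\ref{prop:morethan5pi3} and onward). Instead, for $\l>0$ it takes $R_\l$ to be the graph of the surjection $\g_\l\colon[0,\l]\to S^1$, $t\mapsto e^{2\pi it/\l}$ --- i.e.\ slope $2\pi/\l$, wrapping the segment \emph{exactly once} around the circle. Because this is the graph of a surjective map, $R_\l$ is automatically a correspondence and the distortion depends only on $s=|t-t'|$; one obtains a single piecewise-linear function $f(s)=\bigl|\g(t)\g(t')\bigr|-s$ on $[0,\l]$ whose maximum absolute value is $\max\{\pi-\l/2,\,\l\}$, and this equals $\pi-\l/2$ precisely when $\l\le2\pi/3$. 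The case $\l=0$ is the trivial point-versus-$S^1$ computation.

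Your sketch instead proposes a slope-$2$ diagonal piece plus extra ``wrap-around'' pieces to cover the remaining arc, to be checked via the $D_a$-domains of Theorem~\ref{thm:geomCalc}. Such a piecewise construction can presumably be made to work, but you never pin down the correspondence, and it forces a multi-case containment verification; by contrast, the paper's ``wrap once'' map needs no additional pieces (surjectivity is free) and reduces the distortion check to a two-line piecewise-linear maximization. The trade-off: your approach is closer in spirit to what the paper does for larger $\l$, while the paper's analytic shortcut here makes the threshold $2\pi/3$ appear transparently as the crossover $\pi-\l/2=\l$.
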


\begin{proof}
By Lemma~\ref{lem:CircleSegmentLowerBound}, we get the necessary lower bound for the $d_{GH}(I_\l,S^1)$. It remains to get the same upper bound. To do that, we will construct a special correspondence $R_\l\in\cR(I_\l,S^1)$ with $\dis R_\l=\pi-\l/2$, which completes the proof.

For $\l=0$ we put $R_\l=I_\l\x S^1$. Since the space $I_\l$ consists of a single point, we get
$$
\dis R_\l=\diam S^1=\pi=\pi-\l/2.
$$

Now, suppose that $\l>0$. Identify $\R^2$ with $\C$, and let $R_\l$ be the graph of the mapping $\g_\l\:I_\l\to S^1$ defined as $\g\:t\mapsto e^{2\pi i\,t/\l}$. Notice that the image of the mapping $t\mapsto2\pi t/\l$, $t\in I_\l$, equals $\{e^{it}:t\in[0,2\pi]\}$, thus $R_\l$ is a correspondence, and for any $t,t'\in I_\l$, we have
$$
\bigl|\g(t)\g(t')\bigr|=\min\Bigl\{\frac{2\pi}\l|t-t'|,\,2\pi-\frac{2\pi}\l|t-t'|\Bigr\}.
$$
Since the value $\bigl|\g(t)\g(t')\bigr|$ depends only on $s:=|t-t'|$, we put $f(s)=\bigl|\g(t)\g(t')\bigr|-|t-t'|$, then
$$
\dis R_\l=\max_{0\le s\le\l}\bigl|f(s)\bigr|.
$$
Since the function
$$
f(s)=
\begin{cases}
\frac{2\pi}\l s-s&\text{for $0\le\frac{2\pi}\l s\le\pi$},\\
2\pi-\frac{2\pi}\l s-s&\text{for $\pi\le\frac{2\pi}\l s\le2\pi$},
\end{cases}
$$
is linear on $[0,\l/2]$ and on $[\l/2,\l]$, we get
$$
\max_{0\le s\le\l}\bigl|f(s)\bigr|=\max\Bigl\{\bigl|f(0)\bigr|,\,\bigl|f(\l/2)\bigr|,\,\bigl|f(\l)\bigr|\Bigr\}=
\max\bigl\{0,\,|\pi-\l/2|,\,\l\bigr\}.
$$
Since $0\le\l\le2\pi/3$, then $|\pi-\l/2|=\pi-\l/2\ge\l>0$, therefore $\dis R_\l=\pi-\l/2$.
\end{proof}

\begin{prop}\label{prop:23pito76pi}
If $\frac23\pi\le\l\le\frac76\pi$, then we have $d_{GH}(I_\l,S^1)=\frac\pi3$.
\end{prop}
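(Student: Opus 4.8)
The plan is to prove $d_{GH}(I_\l,S^1)\ge\frac\pi3$ and $d_{GH}(I_\l,S^1)\le\frac\pi3$ separately; the range hypothesis $2\pi/3\le\l\le7\pi/6$ enters only in the second bound (and the endpoint $\l\ge2\pi/3$ only to keep the correspondence below honest). For the lower bound I would invoke Theorem~\ref{thm:circle-cvalued} with $X=S^1$ and $\a\:S^1\to S^1$ the antipodal map: $\a$ has order $2$ and $\bigl|\a(x)\,x\bigr|=\pi=\diam S^1$ for every $x$, while $Y=I_\l$ is compact with $c(I_\l)=0<\pi$ since $I_\l\ss\R$ (Remark~\ref{rk:cX}). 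The theorem then gives $2d_{GH}(I_\l,S^1)\ge\frac23(\pi-0)=\frac{2\pi}3$, that is $d_{GH}(I_\l,S^1)\ge\frac\pi3$. (For $\l\le2\pi/3$ Lemma~\ref{lem:CircleSegmentLowerBound} is stronger, but on the present range $\frac\pi3$ is exactly what is wanted.)

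For the upper bound I would exhibit one correspondence $R\in\cR(I_\l,S^1)$ of distortion $\frac{2\pi}3$. Place $I_\l=[0,\l]$, parameterize $S^1$ by the angle $\v$ modulo $2\pi$, and let $R$ be the graph of
$$
\g_\l(t)=
\begin{cases}
3t & \text{for }0\le t\le 2\pi/3,\\
8\pi/3-t & \text{for }2\pi/3\le t\le\l,
\end{cases}
\qquad t\in[0,\l],
$$
the values read modulo $2\pi$. Both pieces take the value $0\in S^1$ at $t=2\pi/3$, so $\g_\l$ is continuous; on $[0,2\pi/3]$ it winds once around $S^1$, so $\g_\l$ is onto and $R$ is a (closed) correspondence; and on $[2\pi/3,\l]$, being unit-speed of total length $\l-2\pi/3\le\pi/2<\pi$, it parametrizes an arc isometrically. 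On $[0,2\pi/3]$ this $R$ is precisely the correspondence built in Proposition~\ref{prop:0topi3} for $\l=2\pi/3$; the new ingredient is the ``parked'' unit-speed tail.

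The crux is then to verify $\bigl|\dist_{S^1}\bigl(\g_\l(t),\g_\l(t')\bigr)-|t-t'|\bigr|\le\frac{2\pi}3$ for all $t,t'\in[0,\l]$, which I would split into three cases. If $t,t'\in[0,2\pi/3]$, the expression depends only on $s=|t-t'|\in[0,2\pi/3]$ and equals $\bigl|\min(3s,2\pi-3s)-s\bigr|$, which is $\le\frac{2\pi}3$ on that interval (this is the computation in the proof of Proposition~\ref{prop:0topi3}, with equality attained). If $t,t'\in[2\pi/3,\l]$, the tail is an isometry and the expression is $0$. The decisive case is $t\in[0,2\pi/3]$, $t'\in[2\pi/3,\l]$: writing $a=t$ and $b=t'-2\pi/3$, so $0\le a\le2\pi/3$ and $0\le b\le\l-2\pi/3\le\pi/2$, one computes $|t-t'|=b+\frac{2\pi}3-a$ while the $S^1$-distance between $\g_\l(t)$ and $\g_\l(t')$ is the distance from $3a+b$ to $2\pi\Z$, so according as $3a+b\in[0,\pi]$, $3a+b\in[\pi,2\pi]$, or $3a+b\ge2\pi$ the required inequality reduces to $\bigl|4a-\frac{2\pi}3\bigr|\le\frac{2\pi}3$, $\bigl|a+b-\frac{2\pi}3\bigr|\le\frac\pi3$, or $\bigl|a-\frac{2\pi}3\bigr|\le\frac\pi6$ respectively. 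The first holds since $a\le\pi/3$ (forced by $3a\le3a+b\le\pi$); the second reduces to $a+b\in[\pi/3,\pi]$ and the third to $a\ge\pi/2$, and both of these follow from the sub-case hypothesis together with $b\le\pi/2$ — that is, together with $\l\le7\pi/6$. So $\dis R=\frac{2\pi}3$, hence $d_{GH}(I_\l,S^1)\le\frac\pi3$, giving equality with the lower bound. (As alternative bookkeeping, one could instead verify $R\ss D_a(p)$ for every $p\in R$ with $a=2\pi/3$ via Theorem~\ref{thm:geomCalc}.)

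The only genuine difficulty I anticipate is discovering this correspondence and carrying out the third case. Its shape is essentially forced: two points of $I_\l$ lying over a common point of $S^1$ contribute their full $I_\l$-distance to $\dis R$, so a single honest winding — which identifies the two endpoints of $I_\l$ — already has distortion $\l$ and is useless once $\l>2\pi/3$; hence one winds exactly once over an initial sub-interval of length $2\pi/3$ and retraces a length-$1$ arc over the remainder. The restriction $\l\le7\pi/6$ is used exactly once, as the threshold $b\le\pi/2$ in the middle sub-case, which is presumably also the point at which the true Gromov--Hausdorff distance starts to exceed $\frac\pi3$.
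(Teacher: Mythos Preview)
Your argument is correct. The lower bound is exactly the paper's (invoke Theorem~\ref{thm:circle-cvalued} with the antipodal map; the paper routes this through Lemma~\ref{lem:leq-of-circle-and-cvalued}, but the content is the same). For the upper bound the paper takes a different and somewhat slicker correspondence: it keeps the constant speed-$3$ winding $t\mapsto e^{3it}$ on the \emph{whole} interval $[0,\l]$, so that for $\l>2\pi/3$ the map wraps more than once around $S^1$. Because this map has constant angular speed, the quantity $\bigl|\g(t)\g(t')\bigr|-|t-t'|$ depends only on $s=|t-t'|$ and is piecewise linear in $s$ with breakpoints at $\pi/3,\,2\pi/3,\,\pi,\,7\pi/6$; one merely checks that the values there are bounded by $2\pi/3$ in absolute value, avoiding any two-variable case split. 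Your ``wind once, then retrace at unit speed'' correspondence also achieves distortion $2\pi/3$, and your mixed-case verification is clean; the trade-off is that the paper's choice collapses the distortion computation to a one-variable problem, while yours makes more transparent why something has to change past $\l=2\pi/3$. One small correction to your closing speculation: $\l=7\pi/6$ is \emph{not} where the Gromov--Hausdorff distance begins to exceed $\pi/3$ --- by the main theorem the value remains $\pi/3$ all the way to $\l=5\pi/3$. The threshold $7\pi/6$ is only where this particular correspondence (yours and the paper's alike) stops working; a different construction handles the remaining range.
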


\begin{proof}
Applying Lemma~\ref{lem:leq-of-circle-and-cvalued}, it suffices to construct a correspondence $R_\l\in\cR(I_\l,S^1)$ with $\dis R_\l=2\pi/3$, which completes the proof.

Again, we identify $\R^2$ with $\C$, and let $R_\l$ be the graph of the mapping $\g_\l\:I_\l\to S^1$ defined as $\g\:t\mapsto e^{3\pi i\,t}$. Notice that the image of the mapping $t\mapsto3\pi t$, $t\in I_\l$, contains $\{e^{it}:t\in[0,2\pi]\}$, thus $R_\l$ is a correspondence, and for any $t,t'\in I_\l$, we have
$$
\bigl|\g(t)\g(t')\bigr|=\min\Bigl\{3|t-t'|,\,2\pi-3|t-t'|,\,3|t-t'|-2\pi,\,4\pi-3|t-t'|\Bigr\}.
$$
Since the value $\bigl|\g(t)\g(t')\bigr|$ depends only on $s:=|t-t'|$, we put $f(s)=\bigl|\g(t)\g(t')\bigr|-|t-t'|$, then
$$
\dis R_\l=\max_{0\le s\le\l}\bigl|f(s)\bigr|.
$$
Since the function
$$
f(s)=
\begin{cases}
2s&\text{for $0\le s\le\frac\pi3$},\\
2\pi-4s&\text{for $\frac\pi3\le s\le\frac23\pi$},\\
2s-2\pi&\text{for $\frac23\pi\le s\le\pi$},\\
4\pi-4s&\text{for $\pi\le s\le\frac76\pi$},\\
\end{cases}
$$
is linear on $[0,\pi/3]$, $[\pi/3,2\pi/3]$, $[2\pi/3,\pi]$, $[\pi,7\pi/6]$, we get
$$
\max_{0\le s\le7\pi/6}\bigl|f(s)\bigr|=\max\Bigl\{\bigl|f(0)\bigr|,\,\bigl|f(\pi/3)\bigr|,\,\bigl|f(2\pi/3)\bigr|,\, \bigl|f(\pi)\bigr|,\,\bigl|f(7\pi/6)\bigr|\Bigr\}=
\max\{0,\,2\pi/3\}=2\pi/3,
$$
therefore, $\dis R_\l=2\pi/3$.
\end{proof}

\begin{prop}\label{prop:morethan5pi3}
If $5\pi/3\le\l\le2\pi$, then we have $d_{GH}(I_\l,S^1)=\frac{\l-\pi}2$.
\end{prop}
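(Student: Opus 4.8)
The statement has two halves: a lower bound $d_{GH}(I_\l,S^1)\ge(\l-\pi)/2$ and a matching upper bound. For the lower bound I would invoke Theorem~\ref{thm:GH_simple}\refitem{thm:GH_simple:2}, which gives $2d_{GH}(I_\l,S^1)\ge|\diam I_\l-\diam S^1|=|\l-\pi|=\l-\pi$ since $\l\ge 5\pi/3>\pi$; no round-space machinery is needed here because the circle has the \emph{smaller} diameter. So the whole content of the proposition is in constructing, for each $\l\in[5\pi/3,2\pi]$, a correspondence $R_\l\in\cR(I_\l,S^1)$ with $\dis R_\l\le\l-\pi$.

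\medskip
\noindent\textbf{Constructing the correspondence.} Following the pattern of Propositions~\ref{prop:0topi3} and~\ref{prop:23pito76pi}, I would identify $\R^2$ with $\C$ and look for a wrapping map $\g_\l\:I_\l=[0,\l]\to S^1$, $t\mapsto e^{i\om t}$, for a suitable frequency $\om$, and take $R_\l=\gr\g_\l$. For $R_\l$ to be a correspondence the image must be all of $S^1$, i.e.\ $\om\l\ge2\pi$. For such a graph, $\bigl|\g(t)\g(t')\bigr|$ depends only on $s=|t-t'|\in[0,\l]$ and equals the circular distance corresponding to the angle $\om s$, so with $f(s):=\bigl|\g(t)\g(t')\bigr|-s$ one has $\dis R_\l=\max_{0\le s\le\l}|f(s)|$, and $f$ is piecewise linear with breakpoints where $\om s\in\pi\Z$. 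One must choose $\om$ so that on $[0,\l]$ the piecewise-linear function $f$ stays within $[-(\l-\pi),\,\l-\pi]$. Note $f(0)=0$ and $f(\l)=\bigl|\g(0)\g(\l)\bigr|-\l$; the overall ``drift'' $-s$ is what makes large $s$ dangerous, so one wants $\om$ only slightly bigger than $2\pi/\l$ — the natural guess is to wrap exactly once, taking $\om=2\pi/\l$ (as in Proposition~\ref{prop:0topi3} but now in the regime $\l>\pi$). Then $f(s)=\frac{2\pi}{\l}s-s=\bigl(\tfrac{2\pi}{\l}-1\bigr)s$ on $[0,\l/2]$ and $f(s)=2\pi-\bigl(\tfrac{2\pi}{\l}+1\bigr)s$ on $[\l/2,\l]$, which is linear on each half, so $\max|f|=\max\{|f(0)|,|f(\l/2)|,|f(\l)|\}=\max\{0,\,\pi-\l/2,\,\l-\pi\}=\l-\pi$ precisely when $\l\ge2\pi/3$ forces $|\pi-\l/2|\le\l-\pi$, i.e.\ when $3\l/2\ge2\pi$, i.e.\ $\l\ge4\pi/3$. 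Since $5\pi/3>4\pi/3$, this single-wrap correspondence does the job on the whole stated range, giving $\dis R_\l=\l-\pi$.

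\medskip
\noindent\textbf{Wrap-up and main obstacle.} Combining $\dis R_\l=\l-\pi$ with the diameter lower bound yields $d_{GH}(I_\l,S^1)\le(\l-\pi)/2$ and $\ge(\l-\pi)/2$, hence equality. The only genuinely delicate point is verifying that the piecewise-linear $f$ attains its extremes only at the endpoints of the two linearity intervals $[0,\l/2]$ and $[\l/2,\l]$ and that the resulting $\max\{0,\pi-\l/2,\l-\pi\}$ really equals $\l-\pi$ on the full interval $[5\pi/3,2\pi]$ — i.e.\ checking the inequality $\pi-\l/2\le\l-\pi$ and confirming $\l-\pi\le\pi$ (so that $f(\l)$ is a valid circular-distance value and no wraparound case was missed at $s=\l$). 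This is the routine computation I would spell out; conceptually there is no obstacle once one notices that here the circle, not the segment, is the shorter-diameter space, so the cheap $\diam$ bound is already sharp.
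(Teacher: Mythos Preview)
Your lower bound is fine, but the upper-bound construction contains a genuine computational error that makes the argument collapse.

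You take the single-wrap map $\g_\l(t)=e^{2\pi i t/\l}$ and claim
\[
\max|f|=\max\bigl\{|f(0)|,\ |f(\l/2)|,\ |f(\l)|\bigr\}=\max\{0,\ \pi-\l/2,\ \l-\pi\}.
\]
The last entry is wrong. Since $\g_\l(0)=\g_\l(\l)=1$, the two endpoints of $I_\l$ are sent to the \emph{same} point of $S^1$, so at $s=\l$ one has $\bigl|\g(0)\g(\l)\bigr|=0$ and hence $f(\l)=-\l$, i.e.\ $|f(\l)|=\l$, not $\l-\pi$. (Equivalently, plugging $s=\l$ into your own formula $f(s)=2\pi-(2\pi/\l+1)s$ gives $2\pi-2\pi-\l=-\l$.) This is exactly the computation already carried out in Proposition~\ref{prop:0topi3}, where the same correspondence is shown to have
\[
\dis R_\l=\max\bigl\{0,\ |\pi-\l/2|,\ \l\bigr\},
\]
and the term $\l$ dominates once $\l>2\pi/3$. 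For $\l\in[5\pi/3,2\pi]$ your correspondence therefore has distortion $\l$, giving only $d_{GH}(I_\l,S^1)\le\l/2$, which is far from the target $(\l-\pi)/2$.

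In short, no graph of a single smooth wrapping $I_\l\to S^1$ will work here: any surjective $\g$ must identify two points of $I_\l$ at distance $\ge 2\pi/\om$ with images at circular distance $0$, and more generally the ``drift'' $-s$ at large $s$ forces $|f|$ to hit values of order $\l$. The paper's proof abandons function graphs entirely and builds a piecewise-linear correspondence in the rectangle $[-\l/2,\l/2]\times[-\pi,\pi]$ made of several slanted segments (essentially two copies of a bent arc plus short connectors near the corners), and then verifies $\dis R\le\l-\pi$ via the geometric criterion of Theorem~\ref{thm:geomCalc}. That construction, not a wrapping map, is the missing idea.
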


\begin{proof}
By Theorem~\ref{thm:GH_simple}, we have
$$
d_{GH}(I_\l,S^1)\ge\frac{\diam I_\l-\diam S^1}2=\frac{\l-\pi}2.
$$
Thus, it suffices to construct a correspondence $R\in\cR(I_\l,S^1)$ with $\dis R\le\l-\pi=:a$. We will use notations from Section~\ref{sec:geomCalc} and apply Theorem~\ref{thm:geomCalc}.

Consider the correspondence depicted in Figure~\ref{fig:corresp5Pi3}.

\ig{corresp5Pi3}{0.5}{fig:corresp5Pi3}{A correspondence $R$ to prove Proposition~\ref{prop:morethan5pi3}.}

We have to show that for any $(t_0,\v_0)\in R\ss[-\l/2,\l/2]\x[-\pi,\pi]$ the domain $D_a(t_0,\v_0)$ covers $R$. In Figure~\ref{fig:distort5Pi3} we show the corresponding domain $D_a$ endowed with its special sizes; also we write down the corresponding sizes of segments forming $R$.

\ig{distort5Pi3}{0.4}{fig:distort5Pi3}{A correspondence $R$ to prove Proposition~\ref{prop:morethan5pi3}.}

Due to symmetry reasons, it suffices to verify the cases of $(t_0,\v_0)$ lying on blue, green or cyan segments of $R$. We show that in these cases $R$ is covered by orange cross and rose rectangle from the left-hand side of Figure~\ref{fig:distort5Pi3}.

To simplify verification of the formulas below, let us explicitly write down the coordinates of the points $A$, $B$, $C$, and $D$ from Figure~\ref{fig:corresp5Pi3} and~\ref{fig:distort5Pi3} (points with the same names supplied by primes are symmetric w.r.t. the origin):
$$
A=\Bigl(\frac\l2-\frac\pi2,\pi\Bigr),\ \ B=\Bigl(\frac\pi2,\frac\pi2\Bigr),\ \ C=\Bigl(\frac\l4+\frac\pi4,\frac\l4+\frac\pi4\Bigr),\ \ D=\Bigl(\frac\l2,\frac\pi2\Bigr).
$$
It is easy to calculate that
$$
|AC|=\frac{3\pi-\l}{2\sqrt2}\leq\frac{\l-\pi}{\sqrt2},\ \
|CD|=\frac{\l-\pi}{2\sqrt2}<\frac{\l-\pi}{\sqrt2},\ \
|B'C|=|BC'|=\frac{\l+3\pi}{2\sqrt2}<\frac{\l+\pi}{\sqrt2}.
$$
Since the length and width of each of the two rectangles forming the orange cross are $\sqrt2(\l+\pi)$ and $\sqrt2(\l-\pi)$, respectively, then, once the point $(t_0,\phi_0)$ lies on the blue segment of the relation $R$, the orange cross covers $R$.

Now, consider the case when $(t_0,\phi_0)$ lies on the green segment. Still, the whole correspondence $R$ is covered by the orange cross. To see that, it suffices to notice the following evident formulas:
\begin{gather*}
|CC'|=\frac{\l+\pi}{\sqrt2},\ \
|CD|+|C'D'|=\frac{\l-\pi}{\sqrt2},\ \
|A'C'|-|CD|=\frac{2\pi-\l}{\sqrt2}<\frac{\l-\pi}{\sqrt2},\\
|BC|=\frac{\l-\pi}{2\sqrt2}<\frac{\l-\pi}{\sqrt2},\ \
|AD|=\frac\pi{\sqrt2}<\frac{\l+\pi}{\sqrt2}.
\end{gather*}

At last, let $(t_0,\phi_0)$ lie on the cyan segment. Then, similarly, the whole correspondence $R$ except the magenta segment is covered by the orange cross. Let us show that the magenta segment is covered by the both orange cross and the rose rectangle. This can be extracted from the following inequalities:
$$
|AC|+|A'C'|=\frac{3\pi-\l}{\sqrt2}<\frac{\l+\pi}{\sqrt2},\ \
|BB'|=\sqrt2\pi<\sqrt2\pi-\frac{\l-\pi}{\sqrt2},\ \
|CC'|= \frac{\l+\pi}{\sqrt2}.
$$
The proof is completed.
\end{proof}

\begin{lem}\label{lem:leq-of-circle-and-cvalued}
For each $\pi\le\l\le\frac{5\pi}3$ we have $d_{GH}(I_\l,S^1)\ge\frac\pi3$.
\end{lem}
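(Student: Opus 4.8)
The plan is to obtain this lower bound as an immediate application of Theorem~\ref{thm:circle-cvalued}, taken with $X=S^1$ (the standard circle with the intrinsic metric) and $Y=I_\l$. First I would verify the hypotheses on $X$: the circle $S^1$ is connected and compact, $\diam S^1=\pi$, and the antipodal involution $\a\:x\mapsto -x$ is a homeomorphism of order $2$ satisfying $|\a(x)\,x|=\pi=\diam S^1$ for every $x\in S^1$, since the intrinsic distance between antipodal points is exactly half the circumference. This is precisely the ``constant width'' feature of the circle that Theorem~\ref{thm:circle-cvalued} is built to exploit.

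Next I would check the hypothesis on $Y$: the segment $I_\l=[0,\l]$ is compact, and since $I_\l\ss\R$, Remark~\ref{rk:cX} gives $c(I_\l)=0$; in particular $c(I_\l)=0<\pi=\diam S^1$, as required. Theorem~\ref{thm:circle-cvalued} then yields
$$
2d_{GH}(I_\l,S^1)\ge m=\tfrac23\bigl(\diam S^1-c(I_\l)\bigr)=\tfrac23(\pi-0)=\tfrac{2\pi}3,
$$
that is, $d_{GH}(I_\l,S^1)\ge\pi/3$, which is the assertion of the lemma.

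I would also note that this argument delivers the bound $d_{GH}(I_\l,S^1)\ge\pi/3$ for \emph{every} $\l\ge0$, independently of the range restriction; the interval $\pi\le\l\le\frac{5\pi}3$ is singled out only because that is exactly where $\pi/3$ is the sharp value of the distance, while for smaller $\l$ Proposition~\ref{prop:0topi3} supplies the larger value $\pi/2-\l/4$ and for larger $\l$ Proposition~\ref{prop:morethan5pi3} supplies $(\l-\pi)/2$. There is essentially no obstacle in the proof: the only points needing a word are the verification that $\a$ realises the diameter of $S^1$ at every point and that the inequality $c(I_\l)<\diam S^1$ is strict, both of which are immediate.
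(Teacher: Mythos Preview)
Your proof is correct and follows essentially the same approach as the paper: both apply Theorem~\ref{thm:circle-cvalued} with $X=S^1$ equipped with the antipodal involution and $Y=I_\l$, using Remark~\ref{rk:cX} to get $c(I_\l)=0$. Your additional observation that the bound $d_{GH}(I_\l,S^1)\ge\pi/3$ in fact holds for all $\l\ge0$ is correct and worth noting, since the paper also invokes this lemma in Proposition~\ref{prop:23pito76pi} for $\l$ outside the stated range.
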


\begin{proof}
Since $I_\l$ is a subset of $\R$, we have $c(I_\l)=0$ by Remark~\ref{rk:cX}.

For $x\in S^1$, denote the antipodal point of $x$ by $\a(x)$, then $\a\:S^1\to S^1$ is a homeomorphism of order $2$ and $\bigl|x\a(x)\bigr|=\diam S^1$. Since $S^1$ is connected and compact, and $I_\l$ is compact, and $c(I_\l)=0<\pi=\diam S^1$, we can apply Theorem~\ref{thm:circle-cvalued} to the pair $(S^1,I_\l)$ and, thus, we get
$$
2d_{GH}(I_\l,S^1)\ge\frac{2\pi}3.
$$
\end{proof}

\begin{prop}
If $\pi\le\l\le\frac{5\pi}3$, then we have $d_{GH}(I_\l,S^1)=\frac{\pi}3$.
\end{prop}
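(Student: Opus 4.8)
The lower bound $d_{GH}(I_\l,S^1)\ge\pi/3$ for $\pi\le\l\le 5\pi/3$ is exactly Lemma~\ref{lem:leq-of-circle-and-cvalued}, so the whole task is to prove the matching upper bound, i.e.\ to produce for each such $\l$ a correspondence $R_\l\in\cR(I_\l,S^1)$ with $\dis R_\l\le 2\pi/3$. For $\pi\le\l\le 7\pi/6$ such an $R_\l$ already exists: it is the graph of $t\mapsto e^{3it}$ built in the proof of Proposition~\ref{prop:23pito76pi}, whose distortion there is computed to equal $2\pi/3$. Hence I may assume $7\pi/6\le\l\le 5\pi/3$ and must exhibit a new correspondence on this interval. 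I would do this in the rectangle model of Section~\ref{sec:geomCalc}: place $I_\l=[-\l/2,\l/2]$, parametrize $S^1$ by the angular coordinate in $[-\pi,\pi]$, so that $R_\l$ becomes a subset of $Q=[-\l/2,\l/2]\times[-\pi,\pi]$, and abbreviate $a:=2\pi/3$.

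I would take $R_\l$ to be an explicit piecewise-linear subset of $Q$, invariant under the central symmetry $(t,\v)\mapsto(-t,-\v)$ (which is induced by an isometry of $I_\l$ and an isometry of $S^1$, hence leaves all distortion computations unchanged), and arranged so that both coordinate projections are surjective, so that $R_\l$ is indeed a correspondence. Its core is a monotone ``diagonal spine'' joining a neighbourhood of the corner $(-\l/2,-\pi)$ to a neighbourhood of $(\l/2,\pi)$ — morally a nondecreasing polygonal graph $\v=\psi_\l(t)$ of controlled slopes — supplemented, as $\l$ grows towards $5\pi/3$, by two short segments near the opposite corners $(\l/2,-\pi)$ and $(-\l/2,\pi)$ that restore surjectivity of the projection onto the circle factor once the spine has to be bent away from $\v=\pm\pi$ in order to keep the distortion down (this bending is precisely what the naive extension of $t\mapsto e^{3it}$ past $\l=7\pi/6$ fails to do, which is why that map blows up the distortion there). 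Once $R_\l$ is fixed, Theorem~\ref{thm:geomCalc} reduces the distortion estimate to the single geometric statement ``$R_\l\subseteq D_a(t_0,\v_0)$ for every $(t_0,\v_0)\in R_\l$'', where $D_a=D_a(0,0)$ is the domain of Section~\ref{sec:geomCalc}, bounded by segments of slope $\pm1$. Since $R_\l$ is a finite union of segments and $D_a(t_0,\v_0)=(t_0,\v_0)+D_a$, this statement is equivalent to finitely many linear inequalities in $\l$, obtained by letting $(t_0,\v_0)$ run over the vertices and edges of $R_\l$ and measuring the vertices and extreme points of $R_\l$ against the slope-$\pm1$ edges of $D_a(t_0,\v_0)\cap Q$; one then checks that each such inequality holds throughout $7\pi/6\le\l\le 5\pi/3$. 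This is the same flavour of plane-geometry bookkeeping as in the proof of Proposition~\ref{prop:morethan5pi3}.

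The main obstacle is choosing the right $R_\l$: the slopes and width of the diagonal spine and the lengths of the two corner segments must be tuned so that the covering condition $R_\l\subseteq D_{2\pi/3}(t_0,\v_0)$ holds for all $(t_0,\v_0)\in R_\l$ uniformly over the whole range $7\pi/6\le\l\le 5\pi/3$; concretely, the shape should degenerate, as $\l\to 7\pi/6$, to (a reparametrization of) the graph used in Proposition~\ref{prop:23pito76pi}, and, as $\l\to 5\pi/3$, to the ``orange cross plus rose rectangle'' configuration of Proposition~\ref{prop:morethan5pi3} — which reassuringly also yields distortion $2\pi/3$ there, since $\l-\pi=2\pi/3$ at $\l=5\pi/3$. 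Once the shape is pinned down, the rest — enumerating where $(t_0,\v_0)$ sits along $R_\l$, forming $D_a(t_0,\v_0)\cap Q$, and confirming the inclusion — is routine. Finally, at the interval overlaps the values produced by this construction and by Propositions~\ref{prop:23pito76pi} and~\ref{prop:morethan5pi3} agree, so the piecewise description over $[2\pi/3,2\pi]$ is consistent.
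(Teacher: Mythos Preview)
Your lower bound via Lemma~\ref{lem:leq-of-circle-and-cvalued} and your use of Proposition~\ref{prop:23pito76pi} on $[\pi,7\pi/6]$ are fine. The gap is on $[7\pi/6,5\pi/3]$: you never actually write down the correspondence $R_\l$. You describe its qualitative shape (a monotone spine plus two corner segments), list the boundary conditions it should satisfy, and then say ``the main obstacle is choosing the right $R_\l$''. Without an explicit $R_\l$ and the verification of the inclusion $R_\l\subseteq D_{2\pi/3}(t_0,\v_0)$ for all $(t_0,\v_0)\in R_\l$, there is no proof of the upper bound on that range; what you have is a plan of attack, not a proof.

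The paper avoids the whole issue with a device you did not exploit, namely the Remark following Theorem~\ref{thm:geomCalc}: if a set $P\subset Q$ satisfies the covering condition for some $a$, then so does every subset of $P$. Now take $P$ to be the correspondence of Proposition~\ref{prop:morethan5pi3} at the single value $\l=5\pi/3$; there $a=\l-\pi=2\pi/3$, exactly what is needed. For any $\l\in[\pi,5\pi/3]$ the paper keeps the cyan/magenta segments fixed (endpoints $(\pm\pi/3,\pm\pi)$, $(\pm\pi/2,\pm\pi/2)$; these are the values of $A$ and $B$ at $\l=5\pi/3$) and the blue diagonal, and merely truncates the green/red segments so that the $t$-projection becomes $[-\l/2,\l/2]$. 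The resulting $R'_\l$ is a subset of $P$ and is still surjective onto both factors, hence is a correspondence with $\dis R'_\l\le 2\pi/3$ with no new computation whatsoever. So instead of a $\l$-dependent family whose slopes must be tuned and re-verified, everything is reduced to the already-checked case $\l=5\pi/3$; your intuition that the construction should degenerate to the Proposition~\ref{prop:morethan5pi3} picture at $\l=5\pi/3$ is exactly right, but the point is that the degeneration is by pure inclusion, which is what makes the argument immediate.
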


\begin{proof}
Lemma~\ref{lem:leq-of-circle-and-cvalued} implies that it suffices to prove the inequality $d_{GH}(I_\l,S^1)\le\frac\pi3$. To do that, we construct a similar correspondence $R'_\l\in\cR(I_\l,S^1)$ as in the proof of Proposition~\ref{prop:morethan5pi3} as follows. Compared with the previous correspondence, the endpoints of the cyan (respectively, magenta) segment are $(\frac\pi3,\pi)$ and $(\frac\pi2,\frac\pi2)$ (respectively, $(-\frac\pi3,-\pi)$ and $(-\frac\pi2,-\frac\pi2)$).

Let us put $P=R'_{\frac{5\pi}{3}}$. Note that $P$ is exactly the same construction for $\lambda=\frac{5\pi}{3}$ as in Proposition~\ref{prop:morethan5pi3}. Thus, $P$ satisfies the condition of Theorem~\ref{thm:geomCalc} with respect to $a=\frac{2\pi}{3}$.

Note that for $\l\in [\pi,\frac{5\pi}3]$, the correspondence $R'_\l$ is a subgraph of $P$ since $R'_\lambda$ only has shorter green and red segments compared with $P$. Since $P$ satisfies Theorem~\ref{thm:geomCalc} for $a=\frac{2\pi}3$, we conclude that $R'_\l$ also satisfies Theorem~\ref{thm:geomCalc} for $a=\frac{2\pi}{3}$. The proof is completed.
\end{proof}

\begin{prop}\label{prop:morethan2pi}
If $\l\ge2\pi$, then we have $d_{GH}(I_\l,S^1)=\frac{\l-\pi}2$.
\end{prop}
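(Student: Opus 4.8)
The lower bound needs nothing new: since $\diam I_\l=\l\ge2\pi>\pi=\diam S^1$, Theorem~\ref{thm:GH_simple} immediately gives
$$
d_{GH}(I_\l,S^1)\ge\frac{\diam I_\l-\diam S^1}2=\frac{\l-\pi}2,
$$
so the whole content is the matching upper bound. My plan is to obtain it for free from the triangle inequality for $d_{GH}$ by passing through the segment $I_{2\pi}$, whose GH-distance to $S^1$ is already known:
$$
d_{GH}(I_\l,S^1)\le d_{GH}(I_\l,I_{2\pi})+d_{GH}(I_{2\pi},S^1).
$$
All three spaces are bounded, so every term is finite and the inequality is legitimate.

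For the second term I would simply invoke the already proved Proposition~\ref{prop:morethan5pi3} at $\l=2\pi$, which yields $d_{GH}(I_{2\pi},S^1)=\frac\pi2$. For the first term I would exhibit the obvious linear correspondence: writing $I_\l=[0,\l]$ and $I_{2\pi}=[0,2\pi]$, let $R$ be the graph of $t\mapsto\frac{2\pi}\l t$; it is onto $[0,2\pi]$, hence a correspondence. Since $\l\ge2\pi$, the number $1-\frac{2\pi}\l$ is nonnegative, so
$$
\dis R=\sup_{t,t'\in[0,\l]}\Bigl(|t-t'|-\tfrac{2\pi}\l\,|t-t'|\Bigr)=\l\Bigl(1-\tfrac{2\pi}\l\Bigr)=\l-2\pi,
$$
whence $d_{GH}(I_\l,I_{2\pi})\le\frac{\l-2\pi}2$. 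Putting the two bounds together,
$$
d_{GH}(I_\l,S^1)\le\frac{\l-2\pi}2+\frac\pi2=\frac{\l-\pi}2,
$$
which coincides with the lower bound, so equality holds.

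I do not expect a genuine obstacle here; the only points needing care are that the hypothesis $\l\ge2\pi$ is exactly what makes the scaling map $1$-Lipschitz, so that the distortion above equals $\l-2\pi$ and not something larger, and that the boundedness of $I_\l$, $I_{2\pi}$, $S^1$ makes the triangle inequality a statement about finite numbers. If one prefers a single explicit correspondence $I_\l\leftrightarrow S^1$ over the two-step route, it can be produced by precomposing the correspondence built in Proposition~\ref{prop:morethan5pi3} for $\l=2\pi$ with the scaling $t\mapsto\frac{2\pi}\l t$ and checking, via Theorem~\ref{thm:geomCalc}, that its distortion is at most $\l-\pi$; but the triangle-inequality argument is shorter and reuses work already done.
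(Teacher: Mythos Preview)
Your argument is correct. The lower bound is identical to the paper's, and your upper bound via the triangle inequality through $I_{2\pi}$ is valid: Proposition~\ref{prop:morethan5pi3} does cover the endpoint $\l=2\pi$ and gives $d_{GH}(I_{2\pi},S^1)=\pi/2$, and your distortion computation for the scaling correspondence between $I_\l$ and $I_{2\pi}$ is accurate precisely because $\l\ge2\pi$ makes $1-2\pi/\l\ge0$.

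The paper takes a different route for the upper bound. Instead of chaining through $I_{2\pi}$, it builds a single ambient length space $X\subset\C$ consisting of the unit circle with two straight segments of length $(\l-\pi)/2$ attached at the points $\pm1$; inside $X$ it identifies an isometric copy $Y$ of $S^1$ and an isometric copy $Z$ of $I_\l$ (the lower half-circle together with the two attached segments), and reads off $d_H(Y,Z)=(\l-\pi)/2$ directly. This is self-contained and gives a concrete geometric realization of a near-optimal embedding, independent of the rather delicate correspondence constructed in Proposition~\ref{prop:morethan5pi3}. Your approach is shorter and reuses earlier work, at the cost of inheriting the machinery of that proposition; the paper's approach stands on its own and would survive even if Proposition~\ref{prop:morethan5pi3} were removed.
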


\begin{proof}
Again, by Theorem~\ref{thm:GH_simple}, we have
$$
d_{GH}(I_\l,S^1)\ge\frac{\diam I_\l-\diam S^1}2=\frac{\l-\pi}2.
$$
Let $Y=\{e^{it}:0<t<2\}\ss\C$ be the standard unit circle, and
$$
X=\Bigl[-1-\frac{\l-\pi}2,-1\Bigr]\cup\ Y\cup\Bigl[1,1+\frac{\l-\pi}2\Bigr]\ss\C
$$
is equipped with intrinsic metric. Then $S^1$ is isometric to $Y$, and $I_\l$ is isometric to
$$
Z=\Bigl[-1-\frac{\l-\pi}2,-1\Bigr]\cup\ \{e^{it}:\pi\le t\le 2\pi\}\cup\Bigl[1,1+\frac{\l-\pi}2\Bigr]\ss X.
$$
Clearly that $d_H(Y,Z)=\frac{\l-\pi}2$, thus
$$
d_{GH}(I_\l,S^1)\le\frac{\l-\pi}2.
$$
\end{proof}

Gathering together Propositions~\ref{prop:0topi3}--\ref{prop:morethan2pi}, we get

\begin{thm}
Let $S^1=\bigl\{(x,y)\in\R^2:x^2+y^2=1\bigr\}$ be the standard circle with intrinsic metric, and $I_\l=[0,\l]\ss\R$ the segment of the length $\l$. Then
$$
d_{GH}(I_\l,S^1)=
\begin{cases}
\dfrac\pi2-\dfrac\l4& for\ \ 0\le\l\le\dfrac23\pi,\\[10pt]
\dfrac\pi3& for\ \ \dfrac23\pi\le\l\le\dfrac53\pi,\\[10pt]
\dfrac{\l-\pi}2& for\ \ \l\ge\frac53\pi,
\end{cases}
$$
see Figure~$\ref{fig:figure}$.
\end{thm}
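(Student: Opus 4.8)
The plan is to assemble the final formula for $d_{GH}(I_\l,S^1)$ by splicing together the range results proved in Propositions~\ref{prop:0topi3}--\ref{prop:morethan2pi}, checking that the three pieces of the piecewise formula agree on the overlaps and that the overlaps cover the whole parameter interval $[0,\infty)$. First I would note that the first branch $\frac\pi2-\frac\l4$ is covered exactly by Proposition~\ref{prop:0topi3} for $0\le\l\le\frac23\pi$. The middle constant branch $\frac\pi3$ is handled by Proposition~\ref{prop:23pito76pi} on $[\frac23\pi,\frac76\pi]$ together with the unnamed proposition on $[\pi,\frac53\pi]$; since $\pi<\frac76\pi$, these two overlap and their union is all of $[\frac23\pi,\frac53\pi]$. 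The third branch $\frac{\l-\pi}2$ comes from Proposition~\ref{prop:morethan5pi3} on $[\frac53\pi,2\pi]$ and Proposition~\ref{prop:morethan2pi} on $[2\pi,\infty)$, whose union is $[\frac53\pi,\infty)$. So every $\l\ge0$ falls into at least one covered interval.

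Next I would check consistency at the two junction values $\l=\frac23\pi$ and $\l=\frac53\pi$, where two branches meet. At $\l=\frac23\pi$: the first branch gives $\frac\pi2-\frac{1}{4}\cdot\frac{2\pi}{3}=\frac\pi2-\frac\pi6=\frac\pi3$, matching the middle branch; this is also consistent since Proposition~\ref{prop:0topi3} is stated for $0\le\l\le\frac23\pi$ and Proposition~\ref{prop:23pito76pi} for $\frac23\pi\le\l\le\frac76\pi$, so both apply at the endpoint and they agree. At $\l=\frac53\pi$: the middle branch gives $\frac\pi3$, and the third branch gives $\frac{\frac53\pi-\pi}{2}=\frac{\frac23\pi}{2}=\frac\pi3$, again matching, with both Proposition~\ref{prop:morethan5pi3} and the $[\pi,\frac53\pi]$ proposition covering $\l=\frac53\pi$. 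Hence the piecewise definition is well-posed and the value is determined unambiguously for every $\l$.

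The only genuinely non-routine point — and I would expect it to be the main (minor) obstacle — is making sure there are no gaps in the parameter coverage, in particular the stretch $(\frac76\pi,\pi)$. Proposition~\ref{prop:23pito76pi} stops at $\frac76\pi$ and the next constant-value proposition starts at $\pi$, so one must verify that these really do overlap: indeed $\frac76\pi>\pi$, so $[\frac23\pi,\frac76\pi]\cup[\pi,\frac53\pi]=[\frac23\pi,\frac53\pi]$ with no hole. Similarly I would double check $[\frac53\pi,2\pi]\cup[2\pi,\infty)=[\frac53\pi,\infty)$, which is immediate. Once coverage and junction consistency are confirmed, the theorem is nothing more than the statement that the union of these intervals is $[0,\infty)$ and that on each interval the claimed formula was already proved, so the proof consists of citing Propositions~\ref{prop:0topi3}--\ref{prop:morethan2pi} and the intervening proposition and lemma, together with the trivial arithmetic of the two matching checks above. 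I would present it as: ``This follows immediately from Propositions~\ref{prop:0topi3}--\ref{prop:morethan2pi}; the three cases in the formula are exhaustive because $[0,\frac23\pi]$, $[\frac23\pi,\frac76\pi]\cup[\pi,\frac53\pi]$, and $[\frac53\pi,2\pi]\cup[2\pi,\infty)$ cover $[0,\infty)$, and the formulas agree at $\l=\frac23\pi$ (both equal $\frac\pi3$) and at $\l=\frac53\pi$ (both equal $\frac\pi3$).''
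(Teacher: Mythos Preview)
Your proposal is correct and follows exactly the same approach as the paper, which simply states ``Gathering together Propositions~\ref{prop:0topi3}--\ref{prop:morethan2pi}, we get'' the theorem. Your version is merely more explicit in verifying the interval coverage and the matching at the junction points $\l=\frac23\pi$ and $\l=\frac53\pi$; the only cosmetic slip is writing ``the stretch $(\frac76\pi,\pi)$'', which is empty since $\frac76\pi>\pi$ --- but your subsequent reasoning makes clear you meant to check that the interval $[\frac23\pi,\frac76\pi]$ overlaps $[\pi,\frac53\pi]$, which it does.
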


\ig{figure}{0.4}{fig:figure}{Gromov--Hausdorff distance between circle and segment.}


\vfill\eject

\end{document}